\definecolor{cof}{RGB}{219,144,71}
\definecolor{pur}{RGB}{186,146,162}
\definecolor{greeo}{RGB}{91,173,69}
\definecolor{greet}{RGB}{52,111,72}
\newtheorem{theorem}{Theorem}[section]
\newtheorem{lemma}[theorem]{Lemma}
\newtheorem{proposition}[theorem]{Proposition}
\newtheorem{cor}[theorem]{Corollary}
\theoremstyle{definition}
\newtheorem{definition}[theorem]{Definition}
\newtheorem{example}[theorem]{Example}
\theoremstyle{remark}
\newtheorem{remark}[theorem]{\bf{Remark}}
\numberwithin{equation}{section}
\begin{document}
	
	\title[On approximate preservation of orthogonality]{On approximate preservation of orthogonality and its application to isometries}
	
	\author[Mandal, Manna, Paul and Sain  ]{Kalidas Mandal,  Jayanta Manna,  Kallol Paul and Debmalya Sain }

	\address[Mandal]{Department of Mathematics\\ Jadavpur University\\ Kolkata 700032\\ West Bengal\\ INDIA}
\email{iamjayantamanna1@gmail.com}

	\address[Manna]{Department of Mathematics\\ Jadavpur University\\ Kolkata 700032\\ West Bengal\\ INDIA}
	\email{iamjayantamanna1@gmail.com}

	\address[Paul]{Vice-Chancellor, Kalyani University \& Professor of Mathematics\\ Jadavpur University (on lien) \\ Kolkata \\ West Bengal\\ INDIA}
	\email{kalloldada@gmail.com}
	
		\newcommand{\acr}{\newline\indent}
	\address[Sain]{Department of Mathematics\\ Indian Institute of Information Technology, Raichur\\ Karnataka 584135 \\INDIA}
	\email{saindebmalya@gmail.com}
	
	\subjclass[2010]{Primary 46B20,  Secondary 46B04}
	\keywords{Isometry; polyhedral Banach spaces; approximate preservation of orthogonality; support functionals}	
	
	\begin{abstract}
		Motivated by the famous Blanco-Koldobsky-Turn\v{s}ek characterization of isometries, we study the  \textit{approximate preservation of Birkhoff-James orthogonality by a linear operator between  Banach spaces}. In particular, we investigate various geometric and analytic properties related to such preservation on finite-dimensional polyhedral Banach spaces. As an application of the results obtained here, we present refinements of the  Blanco-Koldobsky-Turn\v{s}ek characterization of isometries on certain Banach spaces. 
	\end{abstract}

	\maketitle
	
	\section{Introduction.}
	Determining the isometries on a Banach space is of fundamental importance in functional analysis and operator theory. One of the classical  results in the isometric theory of Banach spaces is the following characterization of isometries due to Koldobsky, Blanco and Turn\v{s}ek \cite{BT06,K93}:\\
	
	\textit{Every bounded linear operator on a Banach space preserving Birkhoff-James orthogonality is an isometry multiplied by a constant.}\\
	
	Recently,  a \textit{local} study of the preservation of Birkhoff-James orthogonality was initiated in \cite{SMP24},  highlighting its importance in understanding  isometries in terms of the geometries of the concerned spaces. A continuation of this idea was pursued in \cite{MMPS25}, to obtain a finer characterization of isometries, in terms of \textit{directional preservation of orthogonality.}  The purpose of this article is to introduce \textit{an approximate version of  preservation of orthogonality by a linear operator between  Banach spaces}. As we will see, this leads to further refinements of the Blanco-Koldobsky-Turn\v{s}ek characterization of isometries in case of certain Banach spaces. Let us now introduce the relevant notations and terminologies  to be used in this article.\\
	
	The letters  $ \mathbb{X}, \mathbb{Y} $ denote Banach spaces.  Throughout this article, we will be working with \textit{real} Banach spaces of dimension strictly greater than $1.$ The dual space of $\mathbb{X}$ is denoted by $\mathbb{X}^*.$ Let
	$B_{\mathbb{X}} = \{x\in \mathbb{X} : \|x\|\leq 1\}$ and $S_{\mathbb{X}} = \{x\in \mathbb{X} : \|x\|=1\}$
	be the  unit ball and the unit sphere of $\mathbb{X}$, respectively. Let $\mathbb{L}(\mathbb{X}, \mathbb{Y})$ denote the space of all bounded linear operators from $ \mathbb{X} $ to $ \mathbb{Y}, $ endowed with the usual operator norm.  Whenever $ \mathbb{X} = \mathbb{Y}, $ we simply write $ \mathbb{L}(\mathbb{X}, \mathbb{Y}) = \mathbb{L}(\mathbb{X}).$   For a nonempty convex set $A \subset \mathbb{X},$ $ x \in A$  is said to be an  extreme point of $A,$ if $x =(1-t)y +tz,$ for some $ t \in(0,1)$ and some $y, z\in A,$ implies that $x = y = z.$ The set of all extreme points of $A$ is denoted by $Ext~A.$ A convex set $F\subset S_{\mathbb{X}}$ is said to be a face of $B_{\mathbb{X}}$ if for every $x_1,x_2\in S_{\mathbb{X}},~ (1-t)x_1+tx_2\in F$ implies that $x_1,x_2\in F,$ where $0<t<1.$ A maximal face of $B_{\mathbb{X}}$ is said to be a facet of $B_{\mathbb{X}}$.  The convex hull of a nonempty set $A\subset \mathbb{X}$ is denoted by $co(A).$ The conical hull of a nonempty set $A\subset \mathbb{X}$ is given by the set $\big\{\sum\limits_{i=1}^{n}a_ix_i : x_i\in A \text{ and }a_i\geq0 \big\}.$ A subset $K$ of $\mathbb{X}$ is said to be a convex cone if 
	\[\text{(i)}~K+K\subset K\text{ and (ii)}~\alpha K\subset K, \text{ for all } \alpha > 0.\] For a nonempty convex set $C \subset \mathbb{X},$ $\text{Int}_r~C$ denotes the relative interior of the set $C,$ i.e., $\text{Int}_r~C=\{x\in C : \text{ there exists }\epsilon>0\text{ such that } B(x,\epsilon)\cap \text{affine}(C)\subset C\},$ where $\text{affine}(C)$ is the intersection of all affine spaces containing $C.$ Let us recall that an affine space is defined as a translation of a subspace of the vector space. A finite-dimensional Banach space $\mathbb{X}$ is said to be polyhedral if $Ext~B_{\mathbb{X}}$ is finite. Clearly, the space  $ \mathbb{X} $ is polyhedral if and only if $ \mathbb{X}^* $ is polyhedral.  Note that for a finite-dimensional polyhedral Banach space $\mathbb{X}$,  $f \in Ext~B_{\mathbb{X}^*} $ if and only if $f$ is an extreme  supporting functional corresponding to a facet of $B_{\mathbb{X}}$ \cite[Lemma 2.1]{SPBB19}.
    
	For a non-zero element $x \in \mathbb{X},$ we denote the collection of all supporting functionals at $x$ by $J(x),$ i.e., $ J(x) = \{ f \in S_{\mathbb{X}^*} : f(x) = \| x \| \}. $  A non-zero  $ x \in \mathbb{X} $ is said to be a smooth point if there exists a unique supporting functional at $x,$ i.e., if $J(x)$ is a singleton. For any $k \in \mathbb{N}, $ a non-zero  $ x \in \mathbb{X} $ is said to be a $ k $-smooth point (or, that the order of smoothness of $ x $ is $k$)  if $dim~span \,J(x)=k.$  For more information on smoothness and $k$-smoothness, the readers may see \cite{DMP22,KS05,LR07,MDP22,MP20,PSG16,SPMR20,W18} and the references therein. 
	
	Given any two elements $x, y \in \mathbb{X}, $ we say that $x$ is Birkhoff–James orthogonal \cite{B35, J47}  to $y,$ written as $ x \perp_B y,$ if $ \| x + \lambda y \| \geq \|x\|$ for all scalars $\lambda. $ It follows from the Hahn-Banach theorem that for a given $x \in \mathbb{X},$ there exists non-zero $y \in \mathbb{X}$ such that $ x \perp_B y.$ The set of all such $y$ is denoted by $ x^{\perp_B} $, i.e., $x^{\perp_B} = \{ y \in \mathbb{X} : x \perp_B y \}. $ For more information on Birkhoff-James orthogonality and its applications in studying operator spaces, we refer the readers to the recently published monograph \cite{Book24}. 
	The following definition \cite{C05} presents an approximate version of Birkhoff-James orthogonality, which plays an important role in describing the geometry of Banach spaces \cite{CSW17,RS23,W22}. 
	\begin{definition}\label{approx}
		Let $\mathbb {X}$  be a normed  linear space and let $x,y\in \mathbb{X}.$ Let $\epsilon\in[0,1).$ Then $x$ is said to be $\epsilon$-approximate Birkhoff-James orthogonal to $y,$ written as $x\perp_B^{\epsilon}y,$ if 
		\[\|x+\lambda y\|^2\geq \|x\|^2-2\epsilon\|x\|\|\lambda y\|,\text{ for all scalars } \lambda.\]
	\end{definition}
	
	 For the sake of simplicity, we will call $\epsilon$-approximate Birkhoff-James orthogonality as $\epsilon$-orthogonality throughout this article. The $\epsilon$-orthogonal set of  $x\in\mathbb{X}$ is denoted by $ x^{\perp_B^\epsilon} $ and is defined by  $x^{\perp_B^\epsilon} = \{ y \in \mathbb{X} : x \perp_B^\epsilon y \}. $ An operator $T\in\mathbb{L}(\mathbb{X},\mathbb {Y})$ is said to preserve  $\epsilon$-orthogonality at $x\in\mathbb{X},$ if for any  $y\in \mathbb{X}, ~ x\perp_B y\implies Tx\perp_B^{\epsilon} Ty.$\\  
	
	In view of the Blanco-Koldobsky-Turn\v{s}ek characterization of isometries, it is natural to consider the following more general query:\\
	
	\textit{Given a Banach space $ \mathbb{X}, $ does there exist an $\epsilon \in (0, 1),$  such that any $ T \in \mathbb{L}(\mathbb{X}) $ preserving  $\epsilon$-orthogonality  is an isometry multiplied by a constant?}\\
	
	It is clear that an affirmative answer to the above query will directly lead to a proper refinement of the Blanco-Koldobsky-Turn\v{s}ek theorem, which is the main motivation behind this study. 
    In order to address the above query, it is convenient to introduce the following definition:
	\begin{definition}
		Let $\mathbb{X}$ be a Banach space. We say that  $\mathbb{X}$ has the \textit{Property P} if there exists an $\epsilon\in (0,1)$ such that   the following two conditions are equivalent:\\
		(i)  $T\in \mathbb{L}(\mathbb{X})$ preserves $\epsilon$-orthogonality  at each $x\in \mathbb{X}.$\\
		(ii) $T\in \mathbb{L}(\mathbb{X})$ is a scalar multiple of an isometry.
	\end{definition}
	
	Clearly, whenever a Banach space $ \mathbb{X} $ possesses the \textit{Property P}, a refinement of the Blanco-Koldobsky-Turn\v{s}ek theorem is possible for $ \mathbb{X}. $ On the other hand, if $ \mathbb{X} $ does not possess the \textit{Property P}, it follows that the Blanco-Koldobsky-Turn\v{s}ek theorem can not be improved by considering approximate preservation of orthogonality. Our main aim is to produce concrete examples of both types of Banach spaces by considering certain geometric properties of the concerned space. We first  investigate some  geometric and analytic properties of finite-dimensional polyhedral Banach spaces, including the smoothness and the number of extreme supporting functionals of each points, related to such preservation. Additionally,  we analyze the linear operators satisfying such preservation of orthogonality, in connection with the geometry of the concerned spaces.  As an application of this study, we characterize the isometries on some Banach spaces, thus obtaining refinements of the Blanco-Koldobsky-Turn\v{s}ek theorem on these spaces. Finally, we establish that  a two-dimensional Banach space, whose unit sphere is given by a regular $2n$-gon $(n>2),$ and the  sequence space $\ell_{\infty}^n~(n>2)$ do satisfy the \textit{Property P}. On the other hand, we prove that the sequence spaces $\ell_p~(1\leq p< \infty)$ do not possess this property.
    
    We end this section with the following characterization of the approximate Birkhoff-James orthogonality, which will be used repeatedly.
	\begin{theorem}\cite[Th. 2.4]{CSW17}\label{charac}
		Let $\mathbb {X}$  be a Banach space and let $x,y\in \mathbb{X}.$ Let $\epsilon\in[0,1).$ Then $x$ is $\epsilon$-approximate Birkhoff-James orthogonal to $y$ if and only if there exists $f\in J(x)$ such that $|f(y)|\leq \epsilon \|y\|.$
	\end{theorem}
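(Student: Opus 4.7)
My plan is as follows. The \emph{if} direction is straightforward: if $f\in J(x)$ satisfies $|f(y)|\leq \epsilon \|y\|$, then for every scalar $\lambda$ one has
\[
\|x+\lambda y\|\;\geq\;\bigl|f(x+\lambda y)\bigr|\;=\;\bigl|\,\|x\|+\lambda f(y)\,\bigr|.
\]
Squaring and expanding the right-hand side as $\|x\|^2+2\lambda\|x\|f(y)+\lambda^2 f(y)^2$, I would drop the non-negative quadratic term and use $2\lambda\|x\|f(y)\geq -2|\lambda|\|x\||f(y)|\geq -2\epsilon\|x\|\|\lambda y\|$ to recover the inequality in Definition \ref{approx}.

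For the converse, suppose $x\perp_B^{\epsilon} y$ with both $x,y$ non-zero (the degenerate cases being trivial). The plan is to translate the quadratic inequality in Definition \ref{approx} into linear information about $J(x)$ via the one-sided Gateaux derivatives
\[
\tau_{\pm}(x,y)\;=\;\lim_{\lambda\to 0^{\pm}}\frac{\|x+\lambda y\|-\|x\|}{\lambda},
\]
which exist by convexity of the norm. Factoring
\[
\|x+\lambda y\|^2-\|x\|^2\;=\;\bigl(\|x+\lambda y\|-\|x\|\bigr)\bigl(\|x+\lambda y\|+\|x\|\bigr),
\]
dividing by $\lambda>0$, and letting $\lambda\to 0^{+}$, I would obtain $\tau_{+}(x,y)\cdot 2\|x\|\geq -2\epsilon\|x\|\|y\|$, so $\tau_{+}(x,y)\geq -\epsilon\|y\|$. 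Applying the same argument with $\lambda<0$ (equivalently, replacing $y$ by $-y$) yields $\tau_{-}(x,y)\leq \epsilon\|y\|$.

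The last step would invoke the classical identifications
\[
\tau_{+}(x,y)\;=\;\max_{f\in J(x)} f(y),\qquad \tau_{-}(x,y)\;=\;\min_{f\in J(x)} f(y),
\]
which are consequences of the Hahn-Banach theorem together with the weak-$\ast$ compactness of $J(x)$. Since $J(x)$ is a non-empty weak-$\ast$ compact convex subset of $S_{\mathbb{X}^*}$ and $f\mapsto f(y)$ is weak-$\ast$ continuous and affine, the image $\{f(y):f\in J(x)\}$ equals the compact interval $[\tau_{-}(x,y),\tau_{+}(x,y)]$. The two bounds above say precisely that this interval meets $[-\epsilon\|y\|,\epsilon\|y\|]$, producing the desired $f\in J(x)$ with $|f(y)|\leq\epsilon\|y\|$.

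The main obstacle is the representation of $\tau_{\pm}(x,y)$ as the extremal values of $f(y)$ over $J(x)$; this is the one non-trivial ingredient and is where the genuine content of Hahn-Banach enters, though it is standard in the Banach space literature. Once that identification is in hand, the remaining manipulations are routine inequalities between convex-interval endpoints.
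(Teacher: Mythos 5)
The paper does not prove Theorem~\ref{charac}; it is imported verbatim from \cite[Th.~2.4]{CSW17}, so there is no internal proof to compare against. Your argument is correct and is essentially the standard route taken in that source: the ``if'' direction via $\|x+\lambda y\|\geq |f(x+\lambda y)|=|\|x\|+\lambda f(y)|$ followed by squaring is fine, and the converse via the one-sided norm derivatives $\tau_{\pm}(x,y)$ is sound --- dividing the defining inequality of Definition~\ref{approx} by $\lambda$ of each sign gives $\tau_{+}(x,y)\geq -\epsilon\|y\|$ and $\tau_{-}(x,y)\leq \epsilon\|y\|$, and since $\{f(y):f\in J(x)\}$ is the compact interval $[\tau_{-}(x,y),\tau_{+}(x,y)]$ (by weak-$*$ compactness and convexity of $J(x)$ together with the classical identities $\tau_{+}(x,y)=\max_{f\in J(x)}f(y)$ and $\tau_{-}(x,y)=\min_{f\in J(x)}f(y)$), these two bounds force that interval to meet $[-\epsilon\|y\|,\epsilon\|y\|]$. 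You correctly isolate the sup/inf representation of $\tau_{\pm}$ over $J(x)$ as the one genuinely non-elementary ingredient; that identity is standard (it goes back to James) and is exactly what \cite{CSW17} relies on as well, so no gap remains once it is cited. The only cosmetic point is to make explicit that the degenerate case $x=0$ needs $\dim\mathbb{X}\geq 2$ to produce a norm-one functional annihilating $y$, which is assumed throughout the paper.
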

	\section{Main Results}


	 We begin this section with the following proposition.  
	
	\begin{proposition}\label{propo}
		Let $\mathbb{X}$ be a finite-dimensional polyhedral Banach space. For each $f \in Ext~B_{\mathbb{X}^*}$, consider the set $\mathcal{A}_f=\{x \in \mathbb{X}: J(x)=\{f\}\}$. Then the following results hold true:
		\begin{itemize}
			\item[(i)] For each $f \in$ Ext $B_{\mathbb{X}^*}$, $\mathcal{A}_f$ is nonempty, open and a convex cone.
			\item[(ii)] For each  non-zero $x\in \mathbb{X},$ $f\in Ext~J(x)$ if and only if $x\in \overline{\mathcal{A}}_f .$
			\item[(iii)] $\mathbb{X}=\bigcup\limits_{f \in Ext~B_{\mathbb{X}^*}}\overline{\mathcal{A}}_f$.
			\item[(iv)] If $f=-g$, then $\overline{\mathcal{A}}_f \cap \overline{\mathcal{A}}_g=\{0\}$.
			\item[(v)] Let $F$ be the facet of $B_{\mathbb{X}}$ corresponding to $f \in Ext~ B_{\mathbb{X}^*}$, then $\mathcal{ A}_f=\{r x: r>0\text{ and }x \in \text{Int}_r~ F\},$ i.e.,  $\mathcal{A}_f\cup \{0\}$  is the conical hull of $\text{Int}_r~ F.$ 
			
		\end{itemize}
	\end{proposition}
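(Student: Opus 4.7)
The plan is to first prove part (v), which gives a concrete description of $\mathcal{A}_f$ as the conical hull (minus the origin) of the relative interior of the facet $F$ dual to $f$, and then to deduce (i)--(iv) from it. The key tool throughout is the duality between facets of $B_{\mathbb{X}}$ and extreme points of $B_{\mathbb{X}^*}$ recorded just before Definition~\ref{approx}: in a finite-dimensional polyhedral space, each $f \in Ext~B_{\mathbb{X}^*}$ corresponds bijectively to the facet $F_f = \{y \in S_{\mathbb{X}} : f(y) = 1\}$ of $B_{\mathbb{X}}$, and for every nonzero $x$ the extreme points of the face $J(x)$ of $B_{\mathbb{X}^*}$ are precisely those $g \in Ext~B_{\mathbb{X}^*}$ whose facet $F_g$ contains $x/\|x\|$.

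For part (v), if $x \in \text{Int}_r~F$ and $r > 0$, I would observe that $x$ cannot lie in any facet of $B_{\mathbb{X}}$ other than $F$, since a point in $F \cap F'$ with $F' \neq F$ lies in a proper face of $F$ and hence on its relative boundary; by the duality this forces $J(x) = \{f\}$, and so $J(rx) = \{f\}$, i.e., $rx \in \mathcal{A}_f$. Conversely, given $y \in \mathcal{A}_f$, I would set $r = \|y\|$ and $x = y/r$, note that $f(x) = 1 = \|x\|$ so $x \in F_f$, and argue that if $x$ were in the relative boundary of $F$ it would belong to another facet $F_g$, producing an extra extreme supporting functional $g \in J(y)$ and contradicting $J(y) = \{f\}$.

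Parts (i)--(iv) are then rather quick consequences. For (i): nonemptiness holds because every face of a polytope has nonempty relative interior; the convex-cone property is cleanest proved directly, noting that for $x, y \in \mathcal{A}_f$ we have $f(x+y) = \|x\| + \|y\|$, so $\|x+y\| = \|x\| + \|y\|$, and any $g \in J(x+y)$ must satisfy $g(x) = \|x\|$ and $g(y) = \|y\|$, hence $g \in J(x) \cap J(y) = \{f\}$; openness is where the polyhedral hypothesis becomes essential, as for $x_0 \in \mathcal{A}_f$ the finitely many strict inequalities $g(x_0) < f(x_0)$, taken over $g \in Ext~B_{\mathbb{X}^*} \setminus \{f\}$, persist on a neighborhood of $x_0$, which together with the duality yields $J(x) = \{f\}$ there. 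For (ii), (v) identifies $\overline{\mathcal{A}}_f$ with the closed cone over $F_f$, so a nonzero $x$ belongs to $\overline{\mathcal{A}}_f$ if and only if $x/\|x\| \in F_f$, which by the duality is equivalent to $f \in Ext~J(x)$. Part (iii) is immediate from the covering $S_{\mathbb{X}} = \bigcup_{f \in Ext~B_{\mathbb{X}^*}} F_f$. For (iv), if $x \neq 0$ lies in $\overline{\mathcal{A}}_f \cap \overline{\mathcal{A}}_{-f}$, then (ii) gives $f, -f \in J(x)$ and so $\|x\| = f(x) = -f(x)$, forcing $x = 0$, a contradiction.

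The main obstacle is keeping the facet--functional duality clean; once it is precisely formulated, the rest is essentially bookkeeping. The most delicate step is the openness in (i), as it is the one place where the polyhedrality (and hence finiteness of $Ext~B_{\mathbb{X}^*}$) is genuinely indispensable.
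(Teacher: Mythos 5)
Your proposal is correct, but it reorganizes the logical dependencies compared with the paper. You prove (v) first, using the standard polytope facts that two distinct facets meet in a proper face and that the relative boundary of a facet is covered by other facets, and then you deduce (ii) by identifying $\overline{\mathcal{A}}_f$ with the closed conical hull of $F$ (which requires the routine but worth-stating observations that $\text{Int}_r\,F$ is dense in $F$ and that $F$ is compact and bounded away from $0$, so the closure of $\{rx: r>0,\ x\in \text{Int}_r\,F\}$ is exactly $\{rx: r\ge 0,\ x\in F\}$). The paper instead proves (ii) directly and self-containedly: the forward implication is a limit argument, and the converse uses the convexity trick that for $f\in Ext~J(x)$ and any $y\in\mathcal{A}_f$ the points $(1-\tfrac1n)x+\tfrac1n y$ all lie in $\mathcal{A}_f$, because a second norming functional at such a point would also norm $y$; the paper then proves (v) at the very end from the smoothness of relative interior points of a facet. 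Your treatment of (i) is essentially identical to the paper's: the same convex-cone computation via $f(\alpha x+\beta y)=\alpha\|x\|+\beta\|y\|$, and the same openness argument (your ``finitely many strict inequalities persist'' is the complement of the paper's ``finite union of closed sets''), with nonemptiness in both cases coming from the existence of a point normed only by $f$. Parts (iii) and (iv) agree with the paper. The trade-off is that your route leans on general facts about faces of polytopes (which the paper only cites implicitly through \cite[Lemma 2.1]{SPBB19}), while the paper's segment argument for (ii) avoids them at the cost of a slightly longer computation; either is acceptable, but if you follow your route you should make the closure identification in the step from (v) to (ii) explicit.
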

	\begin{proof}
		(i) First we show that for each  $f \in Ext~B_{\mathbb{X}^*},$ $\mathcal {A}_f$ is nonempty. Let $f \in Ext~B_{\mathbb{X}^*}.$  Since $\mathbb{X}$ is finite-dimensional polyhedral Banach space, it follows that  there exists $x(\neq 0)\in \mathbb{X}$ such that $g(x)< f(x)=\|x\|$ for all $g\in S_{\mathbb{X}^*}.$ Thus, $J(x)=\{f\}$ and so $x\in \mathcal{A}_f.$ Therefore, $\mathcal{A}_f$ is nonempty.
		
		Next, we show that for each  $f \in Ext~B_{\mathbb{X}^*},$ $\mathcal {A}_f$ is open. Let $f \in Ext~ B_{\mathbb{X}^*}$ and
		$\mathcal{A}_f=\{x \in \mathbb{X}: J(x)=\{f\}\}.$ We observe that if $y\in \mathcal{A}_f^c$ then either $y=0$ or there exists $g(\neq f)\in Ext~B_{\mathbb{X}^*} $ such that $g(y)=\|y\|.$ Hence, we have
		\[	\mathcal{A}_f^c=\bigcup\limits_{g(\neq\pm f) \in Ext~B_{\mathbb{X}^*}}\{y \in \mathbb{X}: g(y)=\|y\|\} .\]
		Now, for each $g \in Ext~ B_{\mathbb{X}^*},$ the set $\{y \in \mathbb{X}: g(y)-\|y\|=0\}$ is closed. Since $\mathbb{X}$ is finite-dimensional, $Ext~B_{\mathbb{X}^*}$ is finite and hence $\mathcal{A}_f^c$ being a finite union of closed sets is closed. Therefore, $\mathcal{A}_f$ is open.
		
		Next, we show that for each $f \in Ext~ B_{\mathbb{X}^*},$ $ \mathcal{A}_f$ is a  convex cone.	Let $f \in Ext~ B_{\mathbb{X}^*}$ and let $x, y \in \mathcal{A}_f.$ Then $f(x)=\|x\|\text{ and }f(y)=\|y\|.$ Clearly, $\alpha x+\beta y \neq 0$ for all $\alpha,\beta> 0.$ Now,	$	\alpha\|x\|+\beta\|y\|  =f(\alpha x+\beta y) \leq\|\alpha x+\beta y\|\leq\alpha \|x\|+\beta \|y\| $ and so $f(\alpha x+\beta y)=\|\alpha x+\beta y\|=\alpha\|x\|+\beta\|y\|.$ Hence $f \in J(\alpha x+\beta y).$ We claim that $J(\alpha x+\beta y)=\{f\}.$ Let  $g \in J(\alpha x+\beta y).$  Observe that $g(\alpha x)\leq \alpha \| x\|$ and $g(\beta y)\leq\beta\| y\|$ and $ g(\alpha x)+g(\beta y)=g(\alpha x+\beta y) =\|\alpha x+\beta y\| =\alpha \|x\|+\beta\|y\|.$ This implies that $g(x)=\|x\|$ and $g(y)=\|y\|$ and so $g=f.$ Hence $J(\alpha x+\beta y)=\{f\}.$ Therefore, $\alpha x+\beta y \in \mathcal{A}_f$ for all $\alpha,\beta>0.$ Thus, $\mathcal{A}_f$ is a convex cone.\\
		
		(ii)   Let $x(\neq 0)\in \mathbb{X}$ and let $f \in Ext~B_{\mathbb{X}^*}$ be such that $x\in \overline{ \mathcal{A}}_f.$ Then there exists a sequence $\{z_n\} \subset \mathcal{A}_f$ such that $z_n \longrightarrow z$ as $n\longrightarrow\infty.$
		So $\|z_n\| \longrightarrow\|z\|$ as $n\longrightarrow \infty.$
		Now, $f(z_n) \longrightarrow f(z)$ implies that $f(z)=\|z\|.$ Thus, $f\in Ext~J(x).$
		
		Conversely, let $x(\neq 0)\in \mathbb{X}$ and let $f \in Ext~J(x).$ We show that $x \in \overline{\mathcal{A}}_f$.	Let $y \in \mathcal{A}_f$. Consider the sequence $\Big\{\Big(1-\frac{1}{n}\Big) x+\frac{1}{n} y\Big\}$. We claim that $\Big(1-\frac{1}{n}\Big) x+\frac{1}{n} y\in\mathcal{A}_f$ for all $n\in \mathbb N.$
		On the contrary to our claim, suppose that $\Big(1-\frac{1}{n_0}\Big) x+\frac{1}{n_0} y \notin \mathcal{A}_f$  for some $n_0 \in \mathbb{N}.$ Then there exists $g \in J\Big(\Big(1-\frac{1}{n_0}\Big) x+\frac{1}{n_0} y\Big)$ for some $g(\neq f) \in Ext~ B_{\mathbb{X}^*}.$ Now, $f(x)=\|x\|$ and $f(y)=\|y\|$  imply that 
		\begin{eqnarray*}
			\Big(1-\frac{1}{n_0}\Big)\|x\|+\frac{1}{n_0}\|y\| & =&\Big|f\Big(\Big(1-\frac{1}{n_0}\Big) x+\frac{1}{n_0} y\Big)\Big| \\
			& \leq&\Big\|\Big(1-\frac{1}{n_0}\Big) x+\frac{1}{n_0} y\Big\| \\
			&\leq& \Big(1-\frac{1}{n_0}\Big)\|x\|+\frac{1}{n_0}\|y\|.
		\end{eqnarray*}
		Hence, \[\Big(1-\frac{1}{n_0}\Big)\|x\|+\frac{1}{n_0}\|y\|=\Big\|\Big(1-\frac{1}{n_0}\Big) x+\frac{1}{n_0} y\Big\|.\]
		We next observe that
		\begin{eqnarray*}
			&&g\Big(\Big(1-\frac{1}{n_0}\Big) x+\frac{1}{n_0} y\Big)=\Big\|\Big(1-\frac{1}{n_0}\Big) x+\frac{1}{n_0} y\Big\| \\
			&\implies & \Big(1-\frac{1}{n_0}\Big) g(x)+\frac{1}{n_0} g(y)=\Big(1-\frac{1}{n_0}\Big)\|x\|+\frac{1}{n_0}\|y\| . \\
			&\implies & g(y)=\|y\| .
		\end{eqnarray*}
		This contradicts the fact that $y \in \mathcal{A}_f.$
		Thus, the sequence $\Big\{\Big(1-\frac{1}{n}\Big) x+\frac{1}{n} y\Big\} \subset \mathcal{A}_f$ and so $x \in \overline{\mathcal{A}}_f.$\\
		(iii) Clearly, $\bigcup\limits_{f \in Ext~B_{\mathbb{X}^*}}\overline{\mathcal{A}}_f\subset\mathbb{X}.$ For the converse part, if $x(\neq 0)\in \mathbb{X}$ then there exists $f\in Ext~B_{\mathbb{X}^*}$ such that $f(x)=\|x\|.$ Then $f\in Ext~J(x)$ and so from (ii), it follows that $x\in \overline{\mathcal{A}}_f.$ Again, it is easy to observe that  $0\in \overline{\mathcal{A}}_f.$ Thus,  $\mathbb{X}\subset\bigcup\limits_{f \in Ext~B_{\mathbb{X}^*}}\overline{\mathcal{A}}_f.$ This completes the proof.\\
		(iv) Let $f=-g.$
		We show that $\overline{\mathcal{A}}_f \cap \overline{\mathcal{A}}_g=\{0\}.$
		Suppose on the contrary that there exists a non-zero $z \in \overline{\mathcal{A}}_f \cap \overline{\mathcal{A}}_g.$ From (ii), we have  $f,g\in Ext~J(z),$
		which is a contradiction. Therefore, $\overline{\mathcal{A}}_f \cap \overline{\mathcal{A}}_g=\{0\}.$\\
		(v) Let $f \in Ext~ B_{\mathbb{X}^*}$ and let $F$ be the facet of $B_{\mathbb{X}}$ corresponding to $f.$ We show that $\mathcal{A}_f=\{r x: r>0\text{ and }x \in \text{Int}_r~ F\}.$ Let $y \in \text{Int}_r~ F.$ Then $y$ is a smooth point of $\mathbb{X}$ and so $J(y)=\{f\}.$
		Thus, $\{r x: r>0\text{ and }x \in \text{Int}_r~ F\} \subset \mathcal{A}_f.$
		Let $z \in \mathcal{A}_f,$ then $z \neq 0$ and
		$ J(z)=\{f\} .$
		So $\frac{z}{\|z\|} \in  \text{Int}_r~ F.$ This implies that $z \in \{r x: r>0\text{ and }x \in \text{Int}_r~ F\}.$
		Thus, $\mathcal{A}_f \subset\{r x: r>0\text{ and }x \in \text{Int}_r~ F\}$ and this completes the proof.
	\end{proof}

	In the next two results, we observe that if $\mathbb{X}=\ell_{\infty}^n $ or $\ell_{1}^n, $ then for any $0\leq\epsilon<1,$ the intersection of the $\epsilon$-orthogonality sets of two smooth points of $\mathbb{X}$  does not contain any hyperspace, unless the supporting functionals of these smooth points are linearly dependent.
	\begin{theorem}\label{linfhyper}
		Let $\mathbb{X}=\ell_{\infty}^n.$ Let $x, y \in \mathbb{X}$ be smooth points of $\mathbb{X}$ such that $J(x) \neq \pm J(y)$. Then for any  $0\leq\epsilon<1,$ the set $x^{\perp_B^\epsilon} \cap y^{\perp_B^\epsilon}$ does not contain any hyperspace.	
	\end{theorem}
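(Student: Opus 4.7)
My plan is to apply Theorem~\ref{charac} to express the $\epsilon$-orthogonality sets as simple coordinate inequalities, and then to derive a contradiction from a one-line dimension count. In $\ell_\infty^n$ the extreme points of the dual unit ball are precisely the signed coordinate functionals $\pm e_i^*$, so a smooth point $x$ has $J(x)=\{\sigma_x e_i^*\}$ where $i$ is the unique coordinate at which $|x_i|=\|x\|_\infty$. Writing $J(y)=\{\sigma_y e_j^*\}$ similarly, the hypothesis $J(x)\ne\pm J(y)$ forces $i\ne j$.

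Since each of $J(x)$ and $J(y)$ is a singleton, Theorem~\ref{charac} immediately gives
\[
x^{\perp_B^\epsilon}\cap y^{\perp_B^\epsilon}=\bigl\{z\in\ell_\infty^n:\max(|z_i|,|z_j|)\le\epsilon\|z\|_\infty\bigr\}.
\]
Because $\epsilon<1$, the crucial consequence to record is that \emph{every nonzero} $z$ in this intersection must realise its $\ell_\infty$-norm at some coordinate whose index lies in $\{1,\ldots,n\}\setminus\{i,j\}$.

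Now I would assume, towards a contradiction, that some hyperspace $H$ is contained in $x^{\perp_B^\epsilon}\cap y^{\perp_B^\epsilon}$, and consider the coordinate projection $Q:H\to\mathbb{R}^{n-2}$ that drops the $i$th and $j$th coordinates. Since $\dim H=n-1>n-2$, the map $Q$ has a nontrivial kernel, so I can choose a nonzero $z\in\ker Q$. Every coordinate of $z$ outside $\{i,j\}$ then vanishes, forcing $\|z\|_\infty=\max(|z_i|,|z_j|)>0$, which directly contradicts the observation of the previous paragraph.

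I do not anticipate any real obstacle: the argument is a clean combination of the dual description of $\ell_\infty^n$ with the pigeonhole inequality $n-1>n-2$. The only point that warrants a sanity check is the corner case $n=2$, where $\mathbb{R}^{n-2}$ is trivial and hence $\ker Q=H$; the same argument still delivers a contradiction because any nonzero vector in $H$ is automatically supported on $\{1,2\}$.
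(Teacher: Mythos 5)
Your proposal is correct and follows essentially the same route as the paper: both reduce via Theorem~\ref{charac} to the observation that any nonzero element of $x^{\perp_B^\epsilon}\cap y^{\perp_B^\epsilon}$ must attain its sup-norm outside the coordinates $\{i,j\}$, and then derive the contradiction from the fact that an $(n-1)$-dimensional subspace must meet the two-dimensional coordinate plane on $\{i,j\}$ nontrivially (your kernel of the projection $Q$ restricted to $H$ is exactly the paper's $H\cap\mathbb{V}$). The only cosmetic difference is that the paper treats $n=2$ as a separate trivial case, which you also note.
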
 
	\begin{proof}
		If $dim(\mathbb{X})=2,$ it can be easily seen that $x^{\perp_B^\epsilon} \cap y^{\perp_B^\epsilon}=\{0\}$ and therefore, $x^{\perp_B^{\epsilon}} \cap y^{\perp_B^{\epsilon}}$ does not contain any hyperspace. Let us assume that $dim(\mathbb{X})>2.$  Note that $Ext~B_{\mathbb{X}^*}=\{\pm f_1,\pm f_2,\dots,\pm f_n\},$ where $f_i(\alpha_1,\alpha_2,\dots,\alpha_n)=\alpha_i$ for $1\leq i\leq n.$ Without loss of generality, we assume that $J(x)=\{f_p\}$ and $J(y)=\{f_q\}$ for distinct $1\leq p,q\leq n.$ Observe that if a non-zero $(z_1,z_2,\dots,z_n)\in x^{\perp_B^\epsilon} \cap y^{\perp_B^\epsilon},$ then using Theorem \ref{charac}, we get $|f_p(z_1,z_2,\dots,z_n)|\leq \epsilon\max\{|z_i|:1\leq i\leq n\} \text{ and }|f_q(z_1,z_2,\dots,z_n)|\leq \epsilon\max\{|z_i|:1\leq i\leq n\}$ and so $|z_p|,|z_q|<\{|z_i|:i\in\{1,2,\dots,n\}\setminus\{p,q\}\}.$ 
		Consider the two-dimensional subspace
		\[	\mathbb{V}=\{(v_1,v_2,\dots,v_n):v_i=0~\forall~i\in\{1,2,\dots,n\}\setminus\{p,q\}\}.\]
		Clearly, $\mathbb{V} \cap(x^{\perp_B^{\epsilon}} \cap y^{\perp_B^{\epsilon}})=\{0\}$.
		Since $\mathbb{V}$ is a two-dimensional subspace of $\mathbb{X}$ and $\mathbb{V} \cap(x^{\perp_B^{\epsilon}} \cap y^{\perp_B^{\epsilon}})=\{0\}$, it follows that $x^{\perp_B^{\epsilon}} \cap y^{\perp_B^{\epsilon}}$ does not contain any hyperspace.
	\end{proof}
	\begin{theorem}
		Let $\mathbb{X}=\ell_{1}^n.$ Let $x, y \in \mathbb{X}$ be two smooth points of $\mathbb{X}$ such that $J(x) \neq \pm J(y)$. Then for any  $0\leq\epsilon<1,$ the set $x^{\perp_B^\epsilon} \cap y^{\perp_B^\epsilon}$ does not contain any hyperspace.	
	\end{theorem}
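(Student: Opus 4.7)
The plan is to mirror the strategy used in Theorem \ref{linfhyper}: exhibit a 2-dimensional subspace $\mathbb{V}\subset \mathbb{X}$ whose intersection with $x^{\perp_B^\epsilon}\cap y^{\perp_B^\epsilon}$ reduces to $\{0\}$, and then conclude by a dimension count (any hyperspace $H\subset \mathbb{X}$ satisfies $\dim(H\cap \mathbb{V})\geq 1$, so $H$ cannot be contained in the intersection). The role of the duality is however different from the $\ell_\infty^n$ case: here $Ext~B_{\mathbb{X}^*}$ consists of the sign vectors $(\epsilon_1,\ldots,\epsilon_n)\in\{\pm 1\}^n$, and a point $x\in \ell_1^n$ is smooth precisely when all its coordinates are non-zero, in which case $J(x)=\{(\mathrm{sgn}(x_1),\ldots,\mathrm{sgn}(x_n))\}$.

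First I would record, writing $J(x)=\{f\}$ with $f=(\epsilon_1,\ldots,\epsilon_n)$ and $J(y)=\{g\}$ with $g=(\delta_1,\ldots,\delta_n)$, that the hypothesis $J(x)\neq \pm J(y)$ forces the existence of indices $p,q\in\{1,\ldots,n\}$ with $\delta_p=\epsilon_p$ (since $g\neq -f$) and $\delta_q=-\epsilon_q$ (since $g\neq f$). I would then define
\[
\mathbb{V}=\mathrm{span}\{e_p,e_q\}.
\]
For an arbitrary $z=ae_p+be_q\in \mathbb{V}$, the change of variables $u=\epsilon_p a$, $v=\epsilon_q b$ yields $f(z)=u+v$, $g(z)=u-v$, and $\|z\|_1=|u|+|v|$. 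The key observation, by a short sign analysis, is that regardless of the signs of $u$ and $v$ one has $\max\{|u+v|,|u-v|\}=|u|+|v|$.

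Invoking Theorem \ref{charac}, any $z\in x^{\perp_B^\epsilon}\cap y^{\perp_B^\epsilon}\cap \mathbb{V}$ must satisfy both $|u+v|\leq \epsilon(|u|+|v|)$ and $|u-v|\leq \epsilon(|u|+|v|)$, whence
\[
|u|+|v|=\max\{|u+v|,|u-v|\}\leq \epsilon(|u|+|v|).
\]
As $\epsilon<1$, this forces $u=v=0$, i.e.\ $z=0$. Thus $\mathbb{V}\cap(x^{\perp_B^\epsilon}\cap y^{\perp_B^\epsilon})=\{0\}$, and the dimension-count argument closes the proof exactly as in Theorem \ref{linfhyper}.

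The main difficulty compared with the $\ell_\infty^n$ case is that, in $\ell_1^n$, every extreme supporting functional depends on \emph{all} coordinates, so one cannot simply select a coordinate plane on which $f$ and $g$ decouple. The correct substitute is to use the $\pm 1$ structure of the sign vectors: choosing one index where $f$ and $g$ agree and one where they disagree makes $f(z)$ and $g(z)$ behave as the two orthogonal linear combinations $u\pm v$ on the chosen plane, which together control the full $\ell_1$-norm of $z$. Once this matching of indices is set up correctly, the rest of the argument is a clean copy of the polyhedral-dimension reasoning already established.
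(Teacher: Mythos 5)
Your proposal is correct and follows essentially the same route as the paper: both select indices $p,q$ where the sign vectors of $J(x)$ and $J(y)$ agree and disagree respectively, restrict to the coordinate plane $\mathrm{span}\{e_p,e_q\}$, observe that one of $|f(z)|,|g(z)|$ must equal $\|z\|_1$ there (your $\max\{|u+v|,|u-v|\}=|u|+|v|$ identity), and conclude via Theorem~\ref{charac} and the dimension count. The only cosmetic difference is your change of variables $u=\epsilon_p a$, $v=\epsilon_q b$, which makes the sign analysis slightly cleaner than the paper's case statement.
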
 
	\begin{proof}
		Let $x=(x_1,x_2,\dots,x_n), y=(y_1,y_2,\dots,y_n) \in \mathbb{X}$ be two smooth points of $\mathbb{X}$ such that $J(x) \neq \pm J(y)$.  If $dim(\mathbb{X})=2,$ it can be easily seen as before that $x^{\perp_B^\epsilon} \cap y^{\perp_B^\epsilon}=\{0\}$ and therefore, $x^{\perp_B^{\epsilon}} \cap y^{\perp_B^{\epsilon}}$ does not contain any hyperspace. Now, let $J(x)=\{f\}$ and $J(y)=\{g\}$, where for each $z=(z_1,z_2,\dots,z_n)$,
		\[f(z)=\sum\limits_{i=1}^n sgn(x_i) z_i~\text{ and }~ g(z)=\sum\limits_{i=1}^n sgn(x_i) z_i.\]
		Since $f$ and $g$ are linearly independent, there exist $1 \leq p, q \leq n$ such that
		\[sgn(x_p)=sgn(y_p)\text{ and } sgn(x_q)=-sgn(y_q).\]
		Consider the two-dimensional subspace
		\[	\mathbb{V}=\{(v_1,v_2,\dots,v_n):v_i=0~\forall~i\in\{1,2,\dots,n\}\setminus\{p,q\}\}.\] 
		We claim that $\mathbb{V} \cap(x^{\perp_B^{\epsilon}} \cap y^{\perp_B^{\epsilon}})=\{0\}$.
		On the contrary to our claim, suppose that there exists a non-zero  $v=(v_1,v_2,\dots,v_n)\in \mathbb{V}\cap x^{\perp_B^\epsilon} \cap y^{\perp_B^\epsilon} .$ From Theorem \ref{charac}, it follows that $|f(v)| \leq \epsilon\|v\|$ and
		$|g(v)| \leq \epsilon\|v\|.$
		Thus,
		\[	|f(v)|=|sgn(x_p)v_p+sgn(x_q)v_q|\text{ and }	|g(v)|=|sgn(y_p)v_p+sgn(y_q)v_q|.\]
		
		This shows that either $|f(v)|=|v_p|+|v_q|=\|v\|$ or $|g(v)|=|v_p|+|v_q|=\|v\|.$ As $\epsilon<1,$ this contradicts the fact that$|f(v)| \leq \epsilon\|v\|$ and
		$|g(v)| \leq \epsilon\|v\|.$ Thus our claim is established.
		Since $\mathbb{V}$ is a two-dimensional subspace of $\mathbb{X}$ and $\mathbb{V} \cap(x^{\perp_B^{\epsilon}} \cap y^{\perp_B^{\epsilon}})=\{0\}$, it follows that $(x^{\perp_B^{\epsilon}} \cap y^{\perp_B^{\epsilon}})$ does not contain any hyperspace.
	\end{proof}
	The previous two results may not hold true in a general Banach space, which is illustrated in the following example. 
	\begin{example}
		Let  $\mathbb{X}$ be the three-dimensional polyhedral Banach space whose unit sphere is the  octagonal prism with vertices $\{u_i,v_i:1\leq i\leq8\},$ where
		$$u_i=\Big(\cos(i-1)\frac{\pi}{4},\sin(i-1)\frac{\pi}{4},1\Big)\text{ and }v_i=\Big(\cos(i-1)\frac{\pi}{4},\sin(i-1)\frac{\pi}{4},-1\Big).$$
		\begin{center}
			\begin{tikzpicture}[scale=.6]
				\filldraw[fill=black!10,dashed,thick] (-2.9,-4.1)--(-2.9,2.1)--(2.6,3.9)--(2.6,-2.2)--(-2.9,-4.1);
				\draw[black,thick](3.2,2.8)--(2.6,3.9)--(.5,4.4)--(-2,4.2)--(-3.5,3.2)--(-2.9,2.1)--(-.6,1.6)--(2,1.9)--(3.2,2.8);
				\draw[black,dotted,thick](3.2,-3.3)--(2.6,-2.2)--(.5,-1.8)--(-2,-2)--(-3.5,-3);
				\draw[black,thick](-3.5,-3)--(-2.9,-4.1)--(-.6,-4.7)--(2,-4.4)--(3.2,-3.3);
				\draw[black](-3.5,0)--(-2.9,-1.1)--(-.6,-1.7)--(2,-1.4)--(3.2,-.3);
				\draw[red](-0.05,3)--(-0,-3.1);
				\draw[black,thick](3.2,2.8)--(3.2,-3.3);
				\draw[black,thick](2,1.9)--(2,-4.4);
				\draw[black,thick](-.6,1.6)--(-.6,-4.7);
				\draw[black,thick](-2.9,2.1)--(-2.9,-4.1);
				\draw[black,thick](-3.5,3.2)--(-3.5,-3);
				\draw[black,dotted,thick](2.6,3.9) -- (2.6,-2.2);
				\draw[black,dotted,thick](.5,4.4) -- (.5,-1.8);
				\draw[black,dotted,thick](-2,4.2) -- (-2,-2);
				\fill[fill=blue] (.7,-1.55) circle (1.4 pt);
				\node [blue] at (.7,-1.85) {$x$};	
				\fill[fill=magenta] (2.7,-.75) circle (1.4pt);
				\node [magenta] at (2.75,-1.1) {$y$};	
				\fill[fill=black] (-0.02,0) circle (1.3 pt);
				\node [black] at (-.25,.15) {$O$};	
				\fill[fill=black] (-2.9,-4.1) circle (1.3pt);
				\node [black] at (-3.2,-4.2) {$v_8$};	
				\fill[fill=black] (2.6,3.9) circle (1.3pt);
				\node [black] at (2.9,4) {$u_4$};	
				\fill[fill=black] (-2.9,2.1) circle (1.3pt);
				\node [black] at (-3.2,2.1) {$u_8$};	
				\fill[fill=black] (2.6,-2.2) circle (1.3pt);
				\node [black] at (2.95,-2.3) {$v_4$};	
				\draw[blue,dashed](2.9,-2.75)--(2.9,3.35) -- (-3.2,2.65)--(-3.2,-3.55)--(2.9,-2.75);
				\draw[dashed,magenta](-1.75,-4.4)--(-1.75,1.85) -- (1.55,4.15)--(1.55,-2)--(-1.75,-4.4);
				\draw[->, black,thick ](0,0)--(-.1,5.5);
				\draw[->, black,thick ](0,0)--(5.5,-0.5);
				\draw[->, black,thick ](0,0)--(-2,-5.5);
				\node [black] at (.4, 5.7) {$Z$};	
				\node [black] at (5.6, -.1) {$Y$};	
				\node [black] at (-2.4, -5.5) {$X$};	
			\end{tikzpicture}
		\end{center}
		Let $x=\big(\frac12+\frac1{2\sqrt{2}},\frac1{2\sqrt{2}},0\big),~y=\big(\frac1{2\sqrt{2}},\frac12+\frac1{2\sqrt{2}},0\big).$ Then $J(x)=\{f\}\text{ and }J(y)=\{g\},$ where $f(\alpha,\beta,\gamma)=\frac{1}{\cos \frac{\pi}{8}}\left(\alpha\cos \frac{\pi}{8}+\beta\sin \frac{\pi}{8}\right)$ and  $g(\alpha,\beta,\gamma)=\frac{1}{\cos \frac{\pi}{8}}\left(\alpha\cos \frac{3\pi}{8}+\beta\sin \frac{3\pi}{8}\right).$ Let $\mathbb{V}=span~\{u_8,v_8\}.$  We show that  the hyperspace $\mathbb{V}\subset x^{\perp_B^\epsilon} \cap y^{\perp_B^\epsilon}$ for $\sqrt2-1\leq\epsilon<1.$ Since Birkhoff-James orthogonality is homogeneous, it is sufficient to show that $\mathbb{V}\cap S_{\mathbb{X}}\subset x^{\perp_B^\epsilon} \cap y^{\perp_B^\epsilon}.$ Clearly, $\mathbb{V}\cap S_{\mathbb{X}}=co(\{u_8,v_8\})\cup co(\{v_8,v_4\})\cup co(\{v_4,u_4\})\cup co(\{u_4,u_8\}).$ As $u_4=-v_8$ and $u_8=-v_4,$ we have $co(\{u_8,v_8\})=-co(\{v_4,u_4\})$ and $co(\{v_8,v_4\})=-co(\{u_4,u_8\}).$ Therefore, it is enough to show that $co(\{u_8,v_8\})\cup co(\{v_8,v_4\})\subset x^{\perp_B^\epsilon} \cap y^{\perp_B^\epsilon}.$\\
		First, we show that $co(\{u_8,v_8\})\subset x^{\perp_B^\epsilon} \cap y^{\perp_B^\epsilon}.$ Let $u\in co(\{u_8,v_8\}).$ Then $u=(1-t)u_8+tv_8=\big(\frac1{\sqrt{2}},-\frac1{\sqrt{2}},1-2t\big),$ for some $t\in [0,1].$ Then
		\[|f(u)|=\Big|\frac{1}{\cos \frac{\pi}{8}}\Big(\frac1{\sqrt{2}}\cos \frac{\pi}{8}-\frac1{\sqrt{2}}\sin \frac{\pi}{8}\Big)\Big|=\sqrt2-1\leq\epsilon=\epsilon\|u\|.\]
		Again,
		\[|g(u)|=\Big|\frac{1}{\cos \frac{\pi}{8}}\Big(\frac1{\sqrt{2}}\cos \frac{3\pi}{8}-\frac1{\sqrt{2}}\sin \frac{3\pi}{8}\Big)\Big|=\sqrt2-1\leq\epsilon=\epsilon\|u\|.\]
		Thus, from Theorem \ref{charac}, it follows that $u\in x^{\perp_B^\epsilon} \cap y^{\perp_B^\epsilon}.$ Therefore,  $co(\{u_8,v_8\})\subset x^{\perp_B^\epsilon} \cap y^{\perp_B^\epsilon}.$
		
		Next, we show that $co(\{v_8,v_4\})\subset x^{\perp_B^\epsilon} \cap y^{\perp_B^\epsilon}.$ Let $v\in co(\{v_8,v_4\}).$ Then $v=(1-t)v_8+tv_4=\big(\frac{1}{\sqrt{2}}(1-2t),-\frac{1}{\sqrt{2}}(1-2t),-1\big),$ for some $t\in [0,1].$ Then
		$$|f(v)|=\Big|\frac{1}{\cos \frac{\pi}{8}}\Big(\frac{1}{\sqrt{2}}(1-2t)\cos \frac{\pi}{8}-\frac{1}{\sqrt{2}}(1-2t)\sin \frac{\pi}{8}\Big)\Big|\leq\sqrt2-1\leq\epsilon=\epsilon\|v\|.$$ Again,
		$$|g(v)|=\Big|\frac{1}{\cos \frac{\pi}{8}}\Big(\frac{1}{\sqrt{2}}(1-2t)\cos \frac{3\pi}{8}-\frac{1}{\sqrt{2}}(1-2t)\sin \frac{3\pi}{8}\Big)\Big|\leq\sqrt2-1\leq\epsilon=\epsilon\|v\|.$$
		Thus, from Theorem \ref{charac}, it follows that $u\in x^{\perp_B^\epsilon} \cap y^{\perp_B^\epsilon}.$ Therefore,  $co(\{v_8,v_4\})\subset x^{\perp_B^\epsilon} \cap y^{\perp_B^\epsilon}.$\\
	\end{example}

	Our next result gives the existence of an $\epsilon>0,$ for which the intersection of the $\epsilon$-orthogonality sets of two smooth points  of a normed linear space does not contain any hyperspace, provided the supporting functionals of these smooth points are linearly independent.
	\begin{theorem}\label{hyperspace}
		Let $\mathbb{X}$ be a normed linear space. Let $x,y\in \mathbb{X}$  be two smooth points  such that $J(x)=\{f\}$ and $J(y)=\{g\},$ where $f,g$ are linearly independent. Suppose that 
		\[K_1=\sup\{|f(z)|:z\in\ker g\cap S_{\mathbb{X}}\}\text{ and }K_2=\sup\{|g(w)|:w\in\ker f\cap S_{\mathbb{X}}\}.\]
		Then for any $0\leq \epsilon<\frac12 \min\{K_1,K_2\},$ there does not exist any hyperspace contained in $x^{\perp_B^\epsilon} \cap y^{\perp_B^\epsilon}.$
	\end{theorem}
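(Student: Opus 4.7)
The plan is to use Theorem~\ref{charac} to rewrite the inclusion $H\subseteq x^{\perp_B^\epsilon}\cap y^{\perp_B^\epsilon}$ as the pointwise inequalities $|f(v)|\le\epsilon\|v\|$ and $|g(v)|\le\epsilon\|v\|$ for every $v\in H$ (valid since $J(x)=\{f\}$ and $J(y)=\{g\}$ are singletons), and then to derive a contradiction by producing a carefully chosen vector in $H$ on which $f$ and $g$ act almost independently. Before doing so, I note that the linear independence of $f,g$ forces both $K_1>0$ and $K_2>0$; indeed, $K_2=0$ would mean that $g$ vanishes on $\ker f$, forcing $g$ to be a scalar multiple of $f$, and similarly for $K_1$.

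Assuming, for contradiction, that some hyperspace $H$ lies in $x^{\perp_B^\epsilon}\cap y^{\perp_B^\epsilon}$, I write $H=\ker\phi$ for a non-zero $\phi\in\mathbb{X}^{*}$. For a small auxiliary parameter $\delta>0$ (needed because the suprema defining $K_1,K_2$ may not be attained), I pick $z\in\ker g\cap S_{\mathbb{X}}$ with $|f(z)|>K_1-\delta$ and $w\in\ker f\cap S_{\mathbb{X}}$ with $|g(w)|>K_2-\delta$. Choosing $\delta$ so small that $K_1-\delta>\epsilon$ and $K_2-\delta>\epsilon$ (possible since $\epsilon<\tfrac12\min\{K_1,K_2\}$), neither $z$ nor $w$ can belong to $H$, so $\phi(z),\phi(w)\ne 0$. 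The natural candidate for the sought-after vector is then $u:=\phi(w)\,z-\phi(z)\,w$, which lies in $H$ by construction.

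The decisive observation is that $z\in\ker g$ and $w\in\ker f$ decouple the actions of $f$ and $g$ on $u$: one gets $f(u)=\phi(w)f(z)$ and $g(u)=-\phi(z)g(w)$, while $\|u\|\le|\phi(w)|+|\phi(z)|$ by the triangle inequality. Feeding this into $|f(u)|\le\epsilon\|u\|$ and $|g(u)|\le\epsilon\|u\|$ yields the pair of estimates
\[
|\phi(w)|(K_1-\delta-\epsilon)\le\epsilon\,|\phi(z)|, \qquad |\phi(z)|(K_2-\delta-\epsilon)\le\epsilon\,|\phi(w)|.
\]
Multiplying the two inequalities and cancelling the positive factor $|\phi(w)|\,|\phi(z)|$ gives $(K_1-\delta-\epsilon)(K_2-\delta-\epsilon)\le\epsilon^{2}$; letting $\delta\downarrow 0$ produces $(K_1-\epsilon)(K_2-\epsilon)\le\epsilon^{2}$. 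But the hypothesis $\epsilon<\tfrac12\min\{K_1,K_2\}$ gives $K_i-\epsilon>\epsilon$ for both $i=1,2$, hence $(K_1-\epsilon)(K_2-\epsilon)>\epsilon^{2}$, the desired contradiction.

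The main obstacle is spotting the auxiliary vector $u=\phi(w)z-\phi(z)w$; once it is in place the proof reduces to a straightforward manipulation of inequalities, and the product structure is precisely what closes off the full range $\epsilon<\tfrac12\min\{K_1,K_2\}$. A secondary technical point is the possible non-attainment of the suprema $K_1,K_2$, which the $\delta$-approximation handles uniformly.
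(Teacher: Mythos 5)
Your argument is correct and is essentially the paper's proof: both assume a hyperspace $H$ is contained in $x^{\perp_B^\epsilon}\cap y^{\perp_B^\epsilon}$, pick near-maximizers $z\in\ker g\cap S_{\mathbb{X}}$ and $w\in\ker f\cap S_{\mathbb{X}}$, take a non-zero element of $H\cap\mathrm{span}\{z,w\}$ (your $\phi(w)z-\phi(z)w$ is exactly the paper's $\alpha u+\beta v$), and exploit the decoupling $f(w)=g(z)=0$ together with Theorem~\ref{charac}. The only cosmetic difference is that you multiply the two resulting inequalities to get $(K_1-\epsilon)(K_2-\epsilon)\le\epsilon^2$ while the paper adds them; your version is in fact marginally sharper, but the hypothesis used is the same.
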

	\begin{proof}
		Since $f$ and $g$ are linearly independent, it follows that there exist $z\in \ker g\setminus \ker f$ and $w\in \ker f\setminus \ker g$ and so $K_1,K_2>0.$ Let
		\[K=\frac12 \min\{K_1,K_2\}.\]
		For $\epsilon=0,$ the theorem holds trivially. Now we show that for each $0<\epsilon<K,$ $x^{\perp_B^\epsilon} \cap y^{\perp_B^\epsilon}$ does not contain any hyperspace. On the contrary suppose that there exists $0<\epsilon_0<K$ such that $x^{\perp_B^{\epsilon_0}} \cap y^{\perp_B^{\epsilon_0}}$ contains a hyperspace. Let $\mu=K-\epsilon_0.$ Then $0<\epsilon_0<K-\frac{\mu}{2}.$ Since $K_1=\sup\{|f(z)|:z\in\ker g\cap S_{\mathbb{X}}\}\text{ and }K_2=\sup\{|g(w)|:w\in\ker f\cap S_{\mathbb{X}}\},$ there exist $u\in \ker g\cap S_{\mathbb{X}} $ and $v\in \ker f\cap S_{\mathbb{X}} $ such that 
		\[|f(u)|\geq K_1-\mu\text{ and }|g(v)|\geq K_2-\mu.\]
		Consider $\mathbb{V}=span~\{u,v\}.$ Since $x^{\perp_B^{\epsilon_0}} \cap y^{\perp_B^{\epsilon_0}}$ contains a hyperspace, it follows that $(x^{\perp_B^{\epsilon_0}} \cap y^{\perp_B^{\epsilon_0}})\cap \mathbb{V}\neq\{0\}$ and so there exist $\alpha,\beta\in \mathbb{R}$ (not both zero) such that $\alpha u+\beta v \in  x^{\perp_B^{\epsilon_0}} \cap y^{\perp_B^{\epsilon_0}}.$ From Theorem \ref{charac}, it follows that 
		\begin{eqnarray*}
			&\text{(i)}&	|\alpha|(2K-\mu)\leq|\alpha|(K_1-\mu)\leq |f(\alpha u+\beta v )|\leq \epsilon_0\|\alpha u+\beta v \|\leq\epsilon_0(|\alpha|+|\beta|),\\
			&\text{(ii)}&|\beta|(2K-\mu)\leq|\beta|(K_2-\mu)\leq|g(\alpha u+\beta v )|\leq \epsilon_0\|\alpha u+\beta v \|\leq\epsilon_0(|\alpha|+|\beta|).
		\end{eqnarray*}
		Hence, $(2K-\mu)
		\leq2\epsilon_0<(2K-\mu),$ which is not possible. Therefore, $x^{\perp_B^{\epsilon_0}} \cap y^{\perp_B^{\epsilon_0}}$ does not contain any hyperspace and this completes the proof.
	\end{proof}
	Using the above result and \cite[Th. 3.2]{S21}, we have the following corollary. 
	\begin{cor}
		Let $\mathbb{X}$ be a reflexive Banach space. Let $x,y\in \mathbb{X}$  be smooth points  such that $J(x)=\{f\}$ and $J(y)=\{g\},$ where $f,g$ are mutually orthogonal in the sense of Birkhoff-James.
		Then for any $0\leq\epsilon<\frac12,$ there does not exist any hyperspace contained in $x^{\perp_B^\epsilon} \cap y^{\perp_B^\epsilon}.$
	\end{cor}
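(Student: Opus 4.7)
The plan is to deduce the corollary as a direct application of Theorem \ref{hyperspace}. All we need is to verify that, under the stated hypotheses, the constants
\[
K_1=\sup\{|f(z)|:z\in\ker g\cap S_{\mathbb{X}}\},\qquad K_2=\sup\{|g(w)|:w\in\ker f\cap S_{\mathbb{X}}\}
\]
are both equal to $1$, because then $\tfrac12\min\{K_1,K_2\}=\tfrac12$ and Theorem \ref{hyperspace} immediately yields the claim for every $0\le\epsilon<\tfrac12$.

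To see that $K_1=1$, I would apply \cite[Th. 3.2]{S21} in the following way. Since $\mathbb{X}$ is reflexive and $f\perp_B g$ in $\mathbb{X}^*$, that theorem provides a norming vector for $f$ lying in $\ker g$; equivalently, it ensures that the restriction $f|_{\ker g}$ has the same norm as $f$. Since $f\in J(x)\subset S_{\mathbb{X}^*}$, this gives $K_1=\|f|_{\ker g}\|=\|f\|=1$. The trivial upper bound $K_1\le\|f\|=1$ is automatic, so $K_1=1$. By the symmetric hypothesis $g\perp_B f$, the same reasoning applied to $g$ on $\ker f$ yields $K_2=1$.

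Having established $K_1=K_2=1$, the remaining obstruction to a hyperspace contained in $x^{\perp_B^\epsilon}\cap y^{\perp_B^\epsilon}$ is handled verbatim by Theorem \ref{hyperspace}: for each $0\le\epsilon<\tfrac12$ we are strictly below $\tfrac12\min\{K_1,K_2\}=\tfrac12$, and the conclusion follows. The only non-routine step is the invocation of \cite[Th. 3.2]{S21} to translate the mutual Birkhoff–James orthogonality of $f$ and $g$ in $\mathbb{X}^*$ into the norm-attainment information about $f$ on $\ker g$ and $g$ on $\ker f$ that Theorem \ref{hyperspace} requires; reflexivity is used precisely here, to guarantee that a norming element for a functional in $\mathbb{X}^*$ can be realized inside $\mathbb{X}$.
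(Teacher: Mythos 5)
Your proposal is correct and follows exactly the route the paper intends: the paper offers no written proof but explicitly attributes the corollary to Theorem \ref{hyperspace} combined with \cite[Th. 3.2]{S21}, which is precisely your argument that mutual Birkhoff--James orthogonality of $f$ and $g$ (via a norming vector for $f$ in $\ker g$ and for $g$ in $\ker f$, guaranteed by reflexivity) forces $K_1=K_2=1$ and hence the threshold $\frac12\min\{K_1,K_2\}=\frac12$. The only cosmetic omission is the remark that $f\perp_B g$ with $f,g\in S_{\mathbb{X}^*}$ rules out linear dependence, which is needed to invoke Theorem \ref{hyperspace} at all.
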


	Let $\mathbb{X} $ be an $n$-dimensional polyhedral Banach space. For each $f \in Ext~ B_{\mathbb{X}^*},$ consider the set $\mathcal{A}_f=\{x \in \mathbb{X}: J(x)=\{f\}\}.$ From Theorem \ref{hyperspace}, it follows that if $f,g\in Ext~ B_{\mathbb{X}^*}$ are linearly independent, then there exists $\epsilon_{fg}>0$ such that $x^{\perp_B^{\epsilon_{fg}}}\cap y^{\perp_B^{\epsilon_{fg}}}$ does not contain any hyperspace, for any $x\in \mathcal{A}_f$ and $y\in \mathcal A_g.$ We define $\epsilon_{\mathbb{X}}$ by
	\[\epsilon_{\mathbb{X}}= \min\{\epsilon_{fg}: f,g\in  Ext~ B_{\mathbb{X}^*} \text{ are linearly independent}\}.\]
	Clearly, $\epsilon_\mathbb{X}\leq 1.$    In particular, for $\mathbb{X}=\ell_{\infty}^n(\text{or,}~\ell_{1}^n),$ we take $\epsilon_{\mathbb{X}}=1.$
	Then for any linearly independent $f,g\in Ext~ B_{\mathbb{X}^*},$  $x^{\perp_B^{\epsilon}}\cap y^{\perp_B^{\epsilon}}$ does not contain any hyperspace, for any $x\in \mathcal{A}_f$ and $y\in \mathcal A_g.$\\

	In the following lemma, we show that for preservation of $\epsilon$-orthogonality  by a non-zero operator,  it is necessary for the operator to be injective. 
	\begin{lemma}\label{bijective}
		Let $\mathbb{X},\mathbb{Y}$ be normed linear spaces. If  a non-zero operator $T \in \mathbb{L}(\mathbb{X},\mathbb{Y})$ preserves $\epsilon$-orthogonality at each $x \in\mathbb{X}$ for any $0 \leq \epsilon<1,$ then $T$ is one-to-one.
	\end{lemma}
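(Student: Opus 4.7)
The plan is to argue by contradiction. Assume there exists $x_0 \in \ker T$ with $x_0 \neq 0$, and engineer a pair $(w', z)$ satisfying $w' \neq 0$, $w' \perp_B z$, and $Tz = Tw'$. Once such a pair is in hand, preservation of $\epsilon$-orthogonality at $w'$ forces $Tw' \perp_B^\epsilon Tw'$; by Theorem \ref{charac} this yields $g \in J(Tw')$ with $|g(Tw')| \leq \epsilon \|Tw'\|$, but $g(Tw') = \|Tw'\| > 0$ by the definition of $J(Tw')$, so $\epsilon \geq 1$, contradicting $\epsilon < 1$. So the task reduces to building the pair $(w', z)$.

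To this end, I would first produce $w \in \mathbb{X}$ with $Tw \neq 0$ and $w \not\perp_B x_0$. Pick any $w_0$ with $Tw_0 \neq 0$ (possible since $T \neq 0$), and consider the translates $w_\lambda := w_0 + \lambda x_0$, each of which still satisfies $Tw_\lambda = Tw_0 \neq 0$. Observe that $w_\lambda \perp_B x_0$ holds precisely when $\lambda$ minimises the real-valued convex function $\phi(\nu) = \|w_0 + \nu x_0\|$; since $\phi(\nu) \to \infty$ as $|\nu| \to \infty$, $\phi$ is not constant, so some $\lambda \in \mathbb{R}$ fails to minimise it. Taking $w = w_\lambda$ for such a $\lambda$ gives the desired element.

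Next, by the $\epsilon = 0$ case of Theorem \ref{charac}, $w \not\perp_B x_0$ means that every $f \in J(w)$ satisfies $f(x_0) \neq 0$. Fix any such $f$ and set $t = -f(x_0)/\|w\|$; then $t \neq 0$ and $f(tw + x_0) = t\|w\| + f(x_0) = 0$, so Theorem \ref{charac} (again with $\epsilon = 0$) gives $w \perp_B (tw + x_0)$. Setting $w' = tw$ and $z = tw + x_0 = w' + x_0$, homogeneity of Birkhoff-James orthogonality yields $w' \perp_B z$, while $Tz = tTw + Tx_0 = Tw'$. Hence $(w', z)$ has the required properties, and the contradiction described in the first paragraph completes the proof.

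The only subtle step is producing a $w$ with $Tw \neq 0$ and $w \not\perp_B x_0$: even if the initial choice $w_0$ happens to be Birkhoff-James orthogonal to $x_0$, the orthogonality must still be breakable by a translation along the kernel direction, which rests solely on the coercivity (and consequent non-constancy) of $\nu \mapsto \|w_0 + \nu x_0\|$ on the line $w_0 + \mathbb{R}x_0$.
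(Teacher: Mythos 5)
Your proof is correct; I checked each step and found no gaps. The translate argument (coercivity and convexity of $\nu\mapsto\|w_0+\nu x_0\|$ forcing some non-minimising $\lambda$) does produce $w$ with $Tw\neq 0$ and $w\not\perp_B x_0$, the functional $f\in J(w)$ with $f(x_0)\neq 0$ yields the explicit orthogonal direction $z=tw+x_0$, and since $Tw'=tTw\neq0$ the relation $Tw'\perp_B^{\epsilon}Tw'$ is indeed incompatible with $\epsilon<1$ via Theorem \ref{charac}.

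The overall strategy coincides with the paper's --- argue by contradiction by exhibiting a Birkhoff--James orthogonal pair whose images under $T$ are nonzero and linearly dependent --- but the construction of the pair is different. The paper works inside the two-dimensional subspace $\mathbb{V}=\mathrm{span}\{u,v\}$ with $u\in\ker T$, $v\notin\ker T$, picks $y\in S_{\mathbb{V}}$ with $\|y-u\|<1$ so that the orthogonal direction $z$ avoids $\mathrm{span}\{u\}$, and then invokes the fact that $T|_{\mathbb{V}}$ has rank one to get $Ty=\lambda Tz$. You instead perturb a non-kernel point along the kernel direction to destroy orthogonality to $x_0$, and then use a supporting functional to write down $z$ explicitly so that $z-w'\in\ker T$, which makes the images literally \emph{equal} rather than merely proportional and slightly streamlines the final contradiction. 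Your route is marginally more computational but avoids the small geometric verifications the paper needs (that $\{u,y\}$ independent with $\|y-u\|<1$ forces $z\notin\mathrm{span}\{u\}$, and that $y,z\notin\ker T$); both rest on the same key tool, namely the functional characterization in Theorem \ref{charac} (including its $\epsilon=0$ case, i.e.\ James' criterion).
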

	\begin{proof}
		Let $0 \leq \epsilon<1$ and let $T(\neq 0)\in\mathbb{L}(\mathbb{X},\mathbb{Y})$ preserve $\epsilon$-orthogonality at each $x \in \mathbb{X}$. If possible suppose that $T$ is not one-to-one.
		Let $u \in \ker T$ and $v\in \mathbb{X}\setminus \ker T$ be such that $\|u\|=\|v\|=1.$ Suppose that $\mathbb{V}=span\{u,v\}.$ 
		Choose  $y \in S_{\mathbb{V}}$ such that $\{u,y\}$ is linearly independent and $\|y-u\|<1.$ Let $z(\neq 0)\in \mathbb{V}$ be such that $y\perp_B z.$ This implies that $z \text{ and } u$ are linearly independent. It is easy to observe that $y,z\notin \ker T.$
		Now, $y \perp_B z \implies T y \perp_B^\epsilon T z.$
		Since $T|_V:\mathbb{V}\longrightarrow T(\mathbb{V})$ is of rank one, $Ty=\lambda Tz$ for some $\lambda\in \mathbb{R}\setminus\{0\}$. This  contradicts the fact that $T y \perp_B^\epsilon T z.$ Therefore, $T$ is one-to-one.
	\end{proof}
	
	We also require the following lemma, which will be used to establish that the linear image corresponding to a facet of the unit ball in a finite-dimensional polyhedral Banach space fully lies in the conical hull of a facet under the preservation of $\epsilon$-orthogonality.
	
	\begin{lemma}\label{openmap}
		Let $\mathbb{X}, \mathbb{Y}$ be Banach spaces and let $T \in \mathbb{L}(\mathbb{X},\mathbb{Y})$ be bijective. Suppose that $A$ and $B$ are two open sets of $\mathbb{X}$ and $\mathbb{Y},$ respectively. If $T(A) \cap \overline{B} \neq \emptyset$ then $T(A) \cap B \neq \emptyset.$
	\end{lemma}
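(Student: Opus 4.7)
The plan is to reduce the statement to a basic topological fact via the Open Mapping Theorem. Since $T \in \mathbb{L}(\mathbb{X}, \mathbb{Y})$ is a bounded linear bijection between Banach spaces, the Open Mapping Theorem applies and guarantees that $T$ is an open map; in particular $T(A)$ is open in $\mathbb{Y}$ whenever $A$ is open in $\mathbb{X}$.

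Once $T(A)$ has been recognised as an open subset of $\mathbb{Y}$, the conclusion is immediate. By the hypothesis $T(A) \cap \overline{B} \neq \emptyset$, I would pick any point $y_0 \in T(A) \cap \overline{B}$. Then $T(A)$ is an open neighbourhood of $y_0$, and since $y_0 \in \overline{B}$, the defining property of the closure forces every open neighbourhood of $y_0$ to meet $B$. Applying this to the open neighbourhood $T(A)$ yields $T(A) \cap B \neq \emptyset$, which is exactly what is required.

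There is no real obstacle here; the only thing to notice is that the hypotheses are precisely tailored so that the Open Mapping Theorem is applicable (Banach spaces, bounded, bijective), and the rest is the elementary characterisation of closure in terms of open neighbourhoods. So the write-up reduces to two short sentences: one invoking openness of $T(A)$, and one exploiting the fact that a point in $\overline{B}$ cannot have a neighbourhood disjoint from $B$.
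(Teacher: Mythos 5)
Your proof is correct and follows essentially the same route as the paper's: invoke the Open Mapping Theorem to see that $T(A)$ is open, then use the characterisation of $\overline{B}$ via open neighbourhoods to conclude that $T(A)$ must meet $B$. The only cosmetic difference is that the paper splits into the cases $Tx \in B$ and $Tx \notin B$ (treating the latter via limit points), whereas your single-step closure argument handles both at once.
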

	\begin{proof}
		Suppose that $T \in \mathbb{L}(\mathbb{X},\mathbb{Y})$ is such that $T(A) \cap \overline{B} \neq \emptyset.$
		Let $x \in A$ be such that $T x \in \overline{B}.$
		If $T x \in B$ then we are done. Let $T x \notin B.$ Then $T x$ is a limit point of $B$. Since $T$ is continuous and bijective, by  the open mapping theorem $T(A)$ is open. Now, $T(A)$ is an open set containing the limit point $T x$ of $B.$
		Thus, $T(A) \cap B \neq \emptyset.$
	\end{proof}
	\begin{remark}
		If $A$ is not open then the above lemma may not hold true. For example, let $\mathbb{X}$ be a Banach space and let $A=S_{\mathbb{X}},B= B_{\mathbb{X}}\setminus S_{\mathbb{X}}.$ Consider the identity operator $T\in \mathbb{L}(X).$ Then $T(A) \cap \overline{B} \neq \emptyset,$ but $T(A) \cap B = \emptyset.$  
	\end{remark}
	
	We are now ready to prove the desired result.
	\begin{theorem}\label{facetTofacet}
		Let $\mathbb{X},\mathbb{Y}$ be $n$-dimensional polyhedral Banach  spaces. For each $f \in Ext~ B_{\mathbb{X}^*}$, consider the set $\mathcal{A}_f=\{x \in \mathbb{X}: J(x)=\{f\}\}$ and  for each $g \in Ext~ B_{\mathbb{Y}^*}$, consider the set $\mathcal{B}_g=\{y \in \mathbb{Y}: J(y)=\{g\}\}.$ Let $0\leq \epsilon<\epsilon_{\mathbb{Y}}.$  If a non-zero operator $T\in \mathbb{L}(\mathbb{X},\mathbb{Y})$ preserves $\epsilon$-orthogonality at each $x \in \mathbb{X}$, then for each $f \in  Ext~ B_{\mathbb{X}^*}, T(\mathcal{A}_f) \subset \mathcal{B}_g$ for some $g \in  Ext~ B_{\mathbb{Y}^*}.$ Moreover, this $g$ determined by $f$ is unique.  
	\end{theorem}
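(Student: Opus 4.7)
The plan is to reduce the claim to a connectedness argument once we have shown that every image $Tx$ with $x \in \mathcal{A}_f$ lies in the open stratum $\mathcal{B}_g$ of a single extreme functional. First, Lemma \ref{bijective} together with $\dim\mathbb{X}=\dim\mathbb{Y}=n$ makes $T$ a linear bijection, hence a homeomorphism that sends hyperspaces to hyperspaces. Next, for a fixed $f \in Ext~B_{\mathbb{X}^*}$, Proposition \ref{propo}(i) gives that $\mathcal{A}_f$ is a nonempty open convex cone; each $x \in \mathcal{A}_f$ satisfies $x^{\perp_B}=\ker f$, so the preservation of $\epsilon$-orthogonality forces the hyperspace $T(\ker f)$ into $(Tx)^{\perp_B^{\epsilon}}$ for every $x \in \mathcal{A}_f$.

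The crucial step I would carry out is proving that for each $x \in \mathcal{A}_f$ one has $Tx \in \mathcal{B}_g$ for some $g \in Ext~B_{\mathbb{Y}^*}$. I would argue by contradiction: if $Tx \notin \mathcal{B}_g$ for every extreme $g$, then since $\mathbb{Y}$ is polyhedral, $J(Tx)$ cannot be a singleton and so must contain at least two distinct extreme functionals $g_1,g_2 \in Ext~B_{\mathbb{Y}^*}$. By Proposition \ref{propo}(iv), $g_2 \neq -g_1$, so $g_1,g_2$ are linearly independent, and by Proposition \ref{propo}(ii), $Tx \in \overline{\mathcal{B}}_{g_1}\cap\overline{\mathcal{B}}_{g_2}$. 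Choosing an open neighborhood $U \subset \mathcal{A}_f$ of $x$, Lemma \ref{openmap} applied to the open sets $U$ and $\mathcal{B}_{g_i}$ yields points $x_1,x_2 \in U \subset \mathcal{A}_f$ with $Tx_i \in \mathcal{B}_{g_i}$ for $i=1,2$. Since $x_i^{\perp_B}=\ker f$ for both indices, we obtain
\[
T(\ker f) \subset (Tx_1)^{\perp_B^{\epsilon}} \cap (Tx_2)^{\perp_B^{\epsilon}},
\]
which contradicts the defining property of $\epsilon_{\mathbb{Y}}$ applied to the linearly independent pair $g_1,g_2$, because $\epsilon < \epsilon_{\mathbb{Y}} \le \epsilon_{g_1 g_2}$ forbids any hyperspace from sitting inside this intersection.

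Having established $T(\mathcal{A}_f) \subset \bigcup_{g \in Ext~B_{\mathbb{Y}^*}} \mathcal{B}_g$, the finish is routine: the sets $\mathcal{B}_g$ are pairwise disjoint (if $y \in \mathcal{B}_g\cap\mathcal{B}_{g'}$ then $\{g\}=J(y)=\{g'\}$) and each is open by Proposition \ref{propo}(i); since $\mathcal{A}_f$ is convex and $T$ is continuous, $T(\mathcal{A}_f)$ is connected, and must therefore lie entirely inside a single $\mathcal{B}_g$, which is automatically unique. The main obstacle I anticipate is precisely the smoothness step: a priori, preservation of $\epsilon$-orthogonality only places $Tx$ in the closure $\overline{\mathcal{B}}_g$, and it is the interplay between the openness of $\mathcal{A}_f$, the open-image lemma \ref{openmap}, and the sharp definition of $\epsilon_{\mathbb{Y}}$ that allows us to upgrade this to membership in the open stratum $\mathcal{B}_g$.
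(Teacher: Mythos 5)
Your proposal is correct and follows essentially the same route as the paper: both hinge on the bijectivity of $T$ (Lemma \ref{bijective}), the observation that $T(\ker f)$ is a hyperspace contained in $(Tx)^{\perp_B^{\epsilon}}$ for every $x\in\mathcal{A}_f$, the use of Lemma \ref{openmap} to pass from the closed strata $\overline{\mathcal{B}}_g$ to the open ones, and the defining property of $\epsilon_{\mathbb{Y}}$ to derive the contradiction. The only difference is organizational --- you establish pointwise that each $Tx$ lands in some open stratum and then invoke connectedness over the pairwise disjoint open sets $\mathcal{B}_g$, whereas the paper first pins $T(\mathcal{A}_f)$ inside a single closure $\overline{\mathcal{B}}_g$ and then upgrades to $\mathcal{B}_g$ --- which does not change the substance of the argument.
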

	\begin{proof}
		Let $T(\neq 0) \in \mathbb{L}(\mathbb{X},\mathbb{Y})$ preserve $\epsilon$-orthogonality at each $x \in \mathbb{X}$.
		It follows from Lemma \ref{bijective} that $T$ is bijective.
		From Proposition \ref{propo}, it follows that
		$\bigcup\limits_{g \in Ext~ B_{\mathbb{Y}^*}}\overline{ \mathcal{B}}_g=\mathbb{Y}$ and $\overline{ \mathcal{B}}_g \cap \overline{ \mathcal{B}}_{-g}=\{0\}$.
		Since for each $f \in Ext~ B_{\mathbb{X}^*},~ \mathcal{A}_f$ is connected, it follows that $T(\mathcal{A}_f)$ is also connected. We observe that if $T(\mathcal{A}_f) \subset \overline{\mathcal{B}}_g \cup \overline{ \mathcal{B}}_{-g},$ then either  $T(\mathcal{A}_f) \subset \overline{\mathcal{B}}_g $ or $T(\mathcal{A}_f) \subset \overline{ \mathcal{B}}_{-g}.$
		We claim that for each $f \in Ext~ B_{\mathbb{X}^*}, T(\mathcal{A}_f)\subset\overline{\mathcal{B}}_g$ for some $g \in Ext~ B_{\mathbb{Y}^*}.$
		On the contrary to our claim, suppose that there exist $x_1, x_2 \in \mathcal{A}_f$ such that
		\[	T x_1 \in \overline{ \mathcal{B}}_{g_1} \text { and } T x_2 \in \overline {\mathcal{B}}_{g_2}\text{ for some } g_1, g_2(\neq\pm g_1) \in Ext~ B_{\mathbb{Y}^*}.\]
		From Lemma \ref{openmap}, it follows that there exist $u_1, u_2 \in \mathcal{A}_f$ such that $T u_1 \in \mathcal{B}_{g_1},T u_2 \in \mathcal{B}_{g_2}.$
		Now, $u_1, u_2 \in \mathcal{A}_f \implies u_1, u_2 \perp_B \ker f.$ This implies that $T(\ker f) \subset( T u_1)^{\perp_B^{\epsilon}} \cap (T u_2)^{\perp_B^{\epsilon}}.$ As $T$ is bijective so $T(\ker f)$ is a hyperspace of $\mathbb{Y}$ and  this contradicts the fact that $(Tu_1)^{\perp_B^{\epsilon}} \cap (T u_2)^{\perp_B^{\epsilon}}$ does not contain any hyperspace.  Thus, $T(\mathcal{A}_f) \subset \overline{\mathcal{B}}_g$ for some $g \in Ext~ B_{\mathbb{Y}^*}$.
		Now, we show that $T(\mathcal{A}_f) \subset \overline{\mathcal{B}}_g \implies T(\mathcal{A}_f) \subset \mathcal{B}_g$. On the contrary, suppose that there exists $x \in \mathcal{A}_f$ such that	$T x \in \overline{\mathcal{B}}_g \text { but } Tx \notin \mathcal{B}_g.$ Then there exists $h(\neq g) \in  Ext~ B_{\mathbb{Y}^*}$ such that $h \in J(Tx).$ So $Tx \in \overline{\mathcal{B}}_h$.
		From Lemma \ref{openmap}, it follows that there exist $v_1, v_2 \in \mathcal{A}_f$ such that $Tv_1 \in \mathcal{B}_g$ and $Tv_2 \in \mathcal{B}_h$. Again, $v_1, v_2 \in \mathcal{A}_f \implies v_1, v_2 \perp_B \ker f.$ This implies that $T(\ker f) \subset (T v_1)^{\perp_B^{\epsilon}} \cap (T v_2)^{\perp_B^{\epsilon}}$ and consequently this leads to a contradiction. Therefore, $T(\mathcal{A}_f) \subset {\mathcal{B}_g}$.\\
		Next, we show that for each $f\in Ext~B_{\mathbb{X}^*},$ there exists  a unique  $g\in Ext~B_{\mathbb{Y}^*}$ such that $T(\mathcal{A}_f) \subset {\mathcal{B}_g}$. If possible suppose that for some $f\in Ext~B_{\mathbb{X}^*},$ there exist two distinct  $g_1,g_2\in Ext~B_{\mathbb{Y}^*}$ such that $T(\mathcal{A}_f) \subset \mathcal{B}_{g_1}$ and  $T(\mathcal{A}_f) \subset \mathcal{B}_{g_2}.$ Then $T(\mathcal{A}_f) \subset \mathcal{B}_{g_1}\cap \mathcal{B}_{g_2}.$ Since  $g_1\neq g_2,$ it follows that $ \mathcal{B}_{g_1}\cap \mathcal{B}_{g_2}=\emptyset.$ This implies that $T(\mathcal{A}_f) \subset \emptyset,$ which is absurd as $\mathcal{A}_f\neq \emptyset$ and $T$ is bijective. Therefore, for each $f\in Ext~B_{\mathbb{X}^*}$ there exists  a unique  $g\in Ext~B_{\mathbb{Y}^*}$ such that $T(\mathcal{A}_f) \subset {\mathcal{B}_g}$.
	\end{proof}
	
	The following corollary is immediate from the above theorem.
	\begin{cor}
		Let $\mathbb{X},\mathbb{Y}$ be $n$-dimensional polyhedral Banach spaces. Let $0\leq \epsilon<\epsilon_{\mathbb{Y}}.$  If a non-zero operator $T\in \mathbb{L}(\mathbb{X},\mathbb{Y})$ preserves $\epsilon$-orthogonality at each $x \in \mathbb{X}$, then  $T$ maps each smooth point of $\mathbb{X}$ to a smooth point of $\mathbb{Y}.$
	\end{cor}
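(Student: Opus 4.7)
The plan is to deduce this corollary as an immediate consequence of Theorem \ref{facetTofacet}. The key observation is that in a finite-dimensional polyhedral Banach space, the unique supporting functional at a smooth point is automatically an extreme point of the dual unit ball. Once this is in hand, the sets $\mathcal{A}_f$ indexed over $f \in Ext~B_{\mathbb{X}^*}$ cover precisely the smooth points of $\mathbb{X}$, and the corollary reduces to reading off the conclusion of Theorem \ref{facetTofacet}.

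First I would let $x \in \mathbb{X}$ be an arbitrary smooth point, so that $J(x) = \{f\}$ for a unique $f \in S_{\mathbb{X}^*}$. I would then argue that $f \in Ext~B_{\mathbb{X}^*}$. Indeed, by Proposition \ref{propo}(v), for every $h \in Ext~B_{\mathbb{X}^*}$ the set $\mathcal{A}_h$ is the conical hull of $\text{Int}_r~F_h$, where $F_h$ is the facet of $B_{\mathbb{X}}$ corresponding to $h$; and by Proposition \ref{propo}(iii), $\mathbb{X}$ is the union of the closures $\overline{\mathcal{A}}_h$. Since $x/\|x\|$ lies in the relative interior of some facet of $B_{\mathbb{X}}$ (as $x$ is smooth and $\mathbb{X}$ is polyhedral), $x$ must belong to $\mathcal{A}_h$ for exactly one extreme functional $h$, which forces $h = f$ and hence $f \in Ext~B_{\mathbb{X}^*}$.

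Having established $x \in \mathcal{A}_f$ with $f \in Ext~B_{\mathbb{X}^*}$, I would invoke Theorem \ref{facetTofacet}: since $0 \leq \epsilon < \epsilon_{\mathbb{Y}}$ and $T$ preserves $\epsilon$-orthogonality at every point of $\mathbb{X}$, there exists a (unique) $g \in Ext~B_{\mathbb{Y}^*}$ such that $T(\mathcal{A}_f) \subset \mathcal{B}_g$. In particular $Tx \in \mathcal{B}_g$, which by definition means $J(Tx) = \{g\}$ is a singleton, so $Tx$ is a smooth point of $\mathbb{Y}$.

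There is no real obstacle here; the entire content of the corollary is packaged in Theorem \ref{facetTofacet}, and the only thing to verify is the identification of smooth points of a polyhedral space with the union $\bigcup_{f \in Ext~B_{\mathbb{X}^*}} \mathcal{A}_f$, which is handled by Proposition \ref{propo}(v). The proof will therefore be short and essentially a one-line appeal to the preceding theorem after this identification.
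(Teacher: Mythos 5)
Your proposal is correct and follows exactly the route the paper intends: the corollary is stated as immediate from Theorem \ref{facetTofacet}, and your argument simply fills in the observation that a smooth point $x$ with $J(x)=\{f\}$ satisfies $f\in Ext~B_{\mathbb{X}^*}$ (which also follows a bit more directly from Proposition \ref{propo}(ii)--(iii), since $x\in\overline{\mathcal{A}}_h$ for some $h\in Ext~B_{\mathbb{X}^*}$ forces $h\in Ext~J(x)\subset J(x)=\{f\}$), whence $x\in\mathcal{A}_f$ by definition and $Tx\in\mathcal{B}_g$ is smooth.
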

	In the following theorem, we investigate the connection between the number of facets of the unit balls of finite-dimensional polyhedral Banach spaces and the preservation of $\epsilon$-orthogonality by a linear operator between these spaces.
	\begin{theorem}\label{distinct}
		Let $\mathbb{X},\mathbb{Y}$ be $n$-dimensional polyhedral Banach spaces. For each $f \in Ext~ B_{\mathbb{X}^*}$, consider the set $\mathcal{A}_f=\{x \in \mathbb{X}: J(x)=\{f\}\}$ and  for each $g \in Ext~ B_{\mathbb{Y}^*}$, consider the set $\mathcal{B}_g=\{y \in \mathbb{Y}:J(y)=\{g\}\}.$  Let $0\leq \epsilon<\epsilon_{\mathbb{Y}}.$  If  a non-zero operator $T\in \mathbb{L}(\mathbb{X},\mathbb{Y})$ preserves $\epsilon$-orthogonality at each $x \in \mathbb{X},$ then the following results hold.
		\begin{itemize}
			\item[(i)] $|Ext~ B_{\mathbb{X}^*}|\geq|Ext~ B_{\mathbb{Y}^*}|.$ 
			\item[(ii)]  $|Ext~ B_{\mathbb{X}^*}|=|Ext~ B_{\mathbb{Y}^*}|$ if and only if  for any two distinct $f_1,f_2\in Ext~ B_{\mathbb{X}^*},$ there exist two distinct $g_1,g_2\in Ext~ B_{\mathbb{Y}^*} $ such that  $T(\mathcal{A}_{f_1}) \subset\mathcal{B}_{g_1}$ and $T(\mathcal{A}_{f_2}) \subset \mathcal{B}_{g_2}.$
		\end{itemize}
	\end{theorem}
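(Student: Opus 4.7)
The plan is to analyze the map $\Phi \colon Ext~B_{\mathbb{X}^*} \to Ext~B_{\mathbb{Y}^*}$ supplied by Theorem \ref{facetTofacet}, namely $\Phi(f)=g$ where $g$ is the unique element of $Ext~B_{\mathbb{Y}^*}$ with $T(\mathcal{A}_f)\subset \mathcal{B}_g$. Both assertions (i) and (ii) will reduce to a pigeonhole argument on $\Phi$, so the central step is to establish the surjectivity of $\Phi$; once that is done, (i) is immediate and (ii) is a cardinality argument between finite sets.

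For the surjectivity of $\Phi$, I would fix $g \in Ext~B_{\mathbb{Y}^*}$ and aim to produce $f \in Ext~B_{\mathbb{X}^*}$ with $\Phi(f)=g$. By Proposition \ref{propo}(i), $\mathcal{B}_g$ is a nonempty open subset of $\mathbb{Y}$. Lemma \ref{bijective} gives that $T$ is one-to-one, and since $\dim \mathbb{X} = \dim \mathbb{Y} = n$ it is a linear homeomorphism, so $T^{-1}(\mathcal{B}_g)$ is nonempty and open in $\mathbb{X}$. The key geometric input is that the smooth-point set $\bigcup_{f \in Ext~B_{\mathbb{X}^*}} \mathcal{A}_f$ is dense in $\mathbb{X}$: by Proposition \ref{propo}(iii) every $x \in \mathbb{X}$ lies in $\overline{\mathcal{A}}_f$ for some $f$, hence is a limit of points in $\mathcal{A}_f$. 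Therefore the nonempty open set $T^{-1}(\mathcal{B}_g)$ must meet some $\mathcal{A}_f$ at a point $x$. Then $Tx \in \mathcal{B}_g$ and, by Theorem \ref{facetTofacet}, also $Tx \in \mathcal{B}_{\Phi(f)}$. Since the singleton-$J$ condition makes $\mathcal{B}_{g_1} \cap \mathcal{B}_{g_2} = \emptyset$ whenever $g_1 \neq g_2$, this forces $\Phi(f)=g$. Thus $\Phi$ is surjective, and part (i) follows at once.

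For (ii), I would exploit that $\Phi$ is a surjection between finite sets. In the direction $(\Rightarrow)$, if $|Ext~B_{\mathbb{X}^*}| = |Ext~B_{\mathbb{Y}^*}|$, then a surjection between finite sets of equal size is automatically a bijection; consequently, for distinct $f_1,f_2 \in Ext~B_{\mathbb{X}^*}$ the images $g_i := \Phi(f_i)$ are distinct and satisfy $T(\mathcal{A}_{f_i}) \subset \mathcal{B}_{g_i}$. Conversely, the displayed condition says exactly that distinct $f_1,f_2$ are sent by $\Phi$ to distinct elements $g_1,g_2$, which is injectivity of $\Phi$; combined with surjectivity from (i) this yields the equality of cardinalities.

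I do not expect a serious obstacle in this argument; the only point requiring care is verifying that the smooth points are dense, which is where Proposition \ref{propo}(i) (openness of each $\mathcal{A}_f$) and \ref{propo}(iii) (covering by closures) are used in tandem, together with the fact that $T$, being a bijection between finite-dimensional spaces, is open.
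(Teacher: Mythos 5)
Your proposal is correct and follows essentially the same route as the paper: both define the map $f\mapsto g$ via Theorem \ref{facetTofacet} and reduce everything to its surjectivity, the paper pulling back a point of $\mathcal{B}_\xi$ and locating it in some $\overline{\mathcal{A}}_\zeta$, while you equivalently intersect the open set $T^{-1}(\mathcal{B}_g)$ with the dense union of the $\mathcal{A}_f$. The cardinality arguments for (i) and (ii) then coincide.
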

	\begin{proof}
		Let  $T(\neq 0) \in \mathbb{L}(\mathbb{X},\mathbb{Y})$ preserve $\epsilon$-orthogonality at each $x \in \mathbb{X}.$ It follows from Lemma \ref{bijective} that $T$ is bijective. Since $\mathbb{X}$ and $\mathbb{Y}$ are finite-dimensional polyhedral Banach spaces, it follows that both $|Ext~ B_{\mathbb{X}^*}|$  and  $|Ext~ B_{\mathbb{Y}^*}|$ are finite.  
		
		(i)  Consider the mapping $\phi:Ext~B_{\mathbb{X}^*}\longrightarrow Ext~B_{\mathbb{Y}^*}$ defined by
		\[ \phi(f)=g,\text{	where } g\text{ is determined by the fact that }T(\mathcal{A}_f)\subset \mathcal{B}_g.\]
		As it follows from Theorem \ref{facetTofacet} that for each $f\in Ext~B_{\mathbb{X}^*}$ there exists a unique $g \in Ext~ B_{\mathbb{Y}^*}$ such that $T(\mathcal{A}_{f}) \subset \mathcal{B}_{g}, $ so the mapping $\phi$ is well defined. 
		We show that $\phi$ is surjective. Let  $\xi \in Ext~ B_{\mathbb{Y}^*}$ be arbitrary. As $\mathcal{B}_\xi\neq \emptyset,$ let $y(\neq0)\in \mathcal{B}_\xi.$ Since $T$ is bijective there exists $x(\neq 0)\in \mathbb{X}$ such that $Tx=y.$ As	$\bigcup\limits_{f \in Ext~ B_{\mathbb{X}^*}}\overline{ \mathcal{A}}_f=\mathbb{X},$ there exists $\zeta \in Ext~ B_{\mathbb{X}^*}$ such that $x\in \overline{\mathcal{A}}_\zeta.$
		Then it follows from Theorem \ref{facetTofacet} that there exists a unique $h \in Ext~ B_{\mathbb{Y}^*}$ such that $T(\mathcal{A}_{\zeta}) \subset \mathcal{B}_{h}.$
		Therefore, $Tx=y\in \overline{\mathcal{B}}_h\cap \mathcal{B}_{\xi}.$ If $h\neq \xi,$ then $\overline{\mathcal{B}}_h\cap \mathcal{B}_{\xi}=\emptyset,$ which is not possible. Thus, $\xi=h$ and so  $T(\mathcal{A}_{\zeta}) \subset \mathcal{B}_{\xi}.$ Hence 
		$\phi(\zeta)=\xi.$ Since $\xi$ is taken arbitrarily, it follows that  each element of $ Ext~ B_{\mathbb{Y}^*}$ has a pre-image in $Ext~ B_{\mathbb{X}^*}.$
		Therefore, $\phi$ is surjective and so $|Ext~ B_{\mathbb{X}^*}|\geq|Ext~ B_{\mathbb{Y}^*}|.$ This completes the proof of (i).
		
		(ii) As the mapping $\phi $ defined in (i) is surjective, the proof of (ii) follows easily from the hypothesis.
	\end{proof}
	
	Before going to the next result, we need to define the neighboring region of  the  set $\mathcal{A}_f=\{z \in \mathbb{X}:J(z)=\{f\}\}.$ For $f,g\in Ext~ B_{\mathbb{X}^*},$ the set $\mathcal A_g$ is said to be a neighboring region of  $\mathcal{A}_{f}$  if $\overline{ \mathcal{A}}_{f}\cap \overline{ \mathcal{A}}_{g}\neq \{0\}.$ For  each $f\in Ext~ B_{\mathbb{X}^*},$ we denote the set of all neighboring regions of $\mathcal{A}_f$ by $\tau(\mathcal{A}_f)$ and thus, 
	\[\tau(\mathcal{A}_f)=\{\mathcal{A}_h:\overline{ \mathcal{A}}_{f}\cap \overline{ \mathcal{A}}_{h}\neq \{0\}\}.\]
	Our next result is as follows. 
	\begin{theorem}\label{cardpreserve1}
		Let $\mathbb{X},\mathbb{Y}$ be $n$-dimensional polyhedral Banach spaces such that $|Ext~ B_{\mathbb{X}^*}|$ $=|Ext~ B_{\mathbb{Y}^*}|.$   For each $f \in Ext~ B_{\mathbb{X}^*}$, consider the set $\mathcal{A}_f=\{z \in \mathbb{X}: J(z)=\{f\}\}$ and for each $g \in Ext~ B_{\mathbb{Y}^*}$, consider the set $\mathcal{B}_g=\{z \in \mathbb{X}: J(z)=\{g\}\}.$ 
		Let $0\leq \epsilon <\epsilon_{\mathbb{Y}}.$ If $T \in \mathbb{L}(\mathbb{X},\mathbb{Y})$ preserves $\epsilon$-orthogonality at each $x \in \mathbb{X},$ then the following results hold true.
		\begin{itemize} \label{card fac equ}
			\item[(i)]	For each $f \in  Ext~ B_{\mathbb{X}^*},$ there exists a unique  $g \in  Ext~ B_{\mathbb{Y}^*}$ such that  $T(\mathcal{A}_f) = \mathcal{B}_g.$ 
			\item[(ii)] For each non-zero $x\in \mathbb{X},$  $|Ext ~J(x)| =|Ext ~J(T x)|$.
			\item[(iii)] For each $f\in Ext B_{\mathbb{X}^*},$ $|\tau(\mathcal{A}_f)|=|\tau(\mathcal{B}_g)|,$ where $T(\mathcal{A}_f)\subset \mathcal{B}_g, ~g\in Ext~ B_{\mathbb{Y}^*}.$
		\end{itemize}
	\end{theorem}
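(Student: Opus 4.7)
The plan is to leverage the mapping $\phi: Ext~B_{\mathbb{X}^*} \to Ext~B_{\mathbb{Y}^*}$ introduced in the proof of Theorem \ref{distinct}, defined by $\phi(f) = g$ if and only if $T(\mathcal{A}_f) \subset \mathcal{B}_g$. By Theorem \ref{distinct}(i), $\phi$ is surjective, and the cardinality hypothesis $|Ext~B_{\mathbb{X}^*}| = |Ext~B_{\mathbb{Y}^*}|$ forces $\phi$ to be a bijection. All three assertions will be obtained by transporting information through $\phi$, repeatedly applying the characterization $f\in Ext~J(x) \Leftrightarrow x\in \overline{\mathcal{A}}_f$ from Proposition \ref{propo}(ii), and using that $T$ and $T^{-1}$ are both continuous (the latter being automatic, since $T$ is bijective between finite-dimensional spaces by Lemma \ref{bijective}).

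For (i), the inclusion $T(\mathcal{A}_f) \subset \mathcal{B}_{\phi(f)}$ is Theorem \ref{facetTofacet}, and uniqueness of $g$ follows from pairwise disjointness of the $\mathcal{B}_g$'s. For the reverse inclusion, I take $y \in \mathcal{B}_{\phi(f)}$ and set $x = T^{-1}(y) \neq 0$. The key step is to show that $x$ is a smooth point of $\mathbb{X}$, so that $x \in \mathcal{A}_{f'}$ for a unique $f'$; then $Tx \in \mathcal{B}_{\phi(f')} \cap \mathcal{B}_{\phi(f)}$ forces $\phi(f') = \phi(f)$, and injectivity of $\phi$ gives $f' = f$. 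Smoothness of $x$ is proved by contradiction: if $x \in \overline{\mathcal{A}}_{f_1} \cap \overline{\mathcal{A}}_{f_2}$ for distinct $f_1, f_2$, approximating $x$ by sequences in $\mathcal{A}_{f_1}$ and $\mathcal{A}_{f_2}$ and applying the continuous $T$ (together with $T(\mathcal{A}_{f_i}) \subset \mathcal{B}_{\phi(f_i)}$) yields $y = Tx \in \overline{\mathcal{B}}_{\phi(f_1)} \cap \overline{\mathcal{B}}_{\phi(f_2)}$ with $\phi(f_1) \neq \phi(f_2)$, contradicting the smoothness of $y$ via Proposition \ref{propo}(ii).

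For (ii), I claim that $\phi$ restricts to a bijection between $Ext~J(x)$ and $Ext~J(Tx)$. The forward direction follows by approximating $x$ by $x_n \in \mathcal{A}_f$ and using $T(\mathcal{A}_f) \subset \mathcal{B}_{\phi(f)}$ to conclude that $Tx \in \overline{\mathcal{B}}_{\phi(f)}$, i.e., $\phi(f) \in Ext~J(Tx)$. The converse uses part (i): if $g \in Ext~J(Tx)$ then, writing $g = \phi(f)$ by bijectivity of $\phi$, we have $Tx \in \overline{\mathcal{B}}_{\phi(f)} = \overline{T(\mathcal{A}_f)}$; approximating by $y_n \in T(\mathcal{A}_f)$ and applying the continuous $T^{-1}$ gives $x_n = T^{-1}(y_n) \in \mathcal{A}_f$ with $x_n \to x$, hence $x \in \overline{\mathcal{A}}_f$ and so $f \in Ext~J(x)$.

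For (iii), observe that $\mathcal{A}_h \in \tau(\mathcal{A}_f)$ iff there exists a non-zero $x \in \overline{\mathcal{A}}_f \cap \overline{\mathcal{A}}_h$, equivalently (by Proposition \ref{propo}(ii)) iff there exists a non-zero $x$ with $\{f, h\} \subset Ext~J(x)$. By (ii), this is equivalent to the existence of a non-zero $Tx$ with $\{\phi(f), \phi(h)\} \subset Ext~J(Tx)$, i.e., $\mathcal{B}_{\phi(h)} \in \tau(\mathcal{B}_{\phi(f)})$. Thus $\phi$ induces a bijection between $\tau(\mathcal{A}_f)$ and $\tau(\mathcal{B}_{\phi(f)})$, and taking cardinalities finishes the proof. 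I expect the main obstacle to be the smoothness argument in (i); once this is secured, (ii) and (iii) are essentially bookkeeping driven by the bijection $\phi$ and the equivalence in Proposition \ref{propo}(ii).
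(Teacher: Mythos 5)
Your proposal is correct and follows essentially the same route as the paper: both rest on the bijectivity of the induced map $f\mapsto g$ (surjectivity from Theorem \ref{distinct} plus equal finite cardinalities), Proposition \ref{propo}(ii) to translate between extreme supporting functionals and membership in $\overline{\mathcal{A}}_f$, and continuity of $T$ and $T^{-1}$ for the approximation steps. Your phrasing of the reverse inclusion in (i) via smoothness of $x=T^{-1}y$ is only a cosmetic repackaging of the paper's contradiction $y\in\overline{\mathcal{B}}_{g_1}\cap\mathcal{B}_g=\emptyset$.
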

	\begin{proof} 
		(i)  	Let $f\in Ext~B_{\mathbb{X}^*}.$ From Theorem \ref{facetTofacet}, it follows that  $T(\mathcal{A}_{f}) \subset \mathcal{B}_{g}, $ for a unique $g \in Ext~ B_{\mathbb{Y}^*}.$ We show that   $T(\mathcal{A}_{f}) = \mathcal{B}_{g}. $ On the contrary suppose that $y\in \mathcal{B}_{g}\setminus T(\mathcal{A}_{f}).$  From Lemma \ref{bijective}, it follows that $T$ is bijective and so there exists $x(\neq 0)\in \mathbb{X}$ such that $Tx=y.$ Then there exists $f_1(\neq f)\in Ext~B_{\mathbb{X}^*} $ such that $f_1\in Ext~J(x).$ Thus,  $x\in \overline{\mathcal{A}}_{f_1}.$  From [(ii), Theorem \ref{distinct}], it follows that there exists  $g_1(\neq g) \in Ext~ B_{\mathbb{Y}^*}$ such that $T(\mathcal{A}_{f_1}) \subset \mathcal{B}_{g_1}.$ Thus, $Tx\in\overline{\mathcal{B}}_{g_1} $ and so $Tx=y\in \overline{\mathcal{B}}_{g_1}\cap\mathcal{B}_{g}=\emptyset.$ This is a contradiction. Thus, $T(\mathcal{A}_{f}) = \mathcal{B}_{g}. $

		(ii) 	Let $x(\neq 0) \in \mathbb{X}$ and let $f\in Ext~J(x)$ be arbitrary. From Proposition \ref{propo}, it follows that $x\in \overline{ \mathcal{A}}_{f}.$ Now, it follows from (i) that there exists a unique $g \in Ext~ B_{\mathbb{Y}^*}$ such that $T(\mathcal{A}_{f}) = \mathcal{B}_{g}.$ Thus, $Tx\in  \overline{ \mathcal{B}}_{g}$ and so $g\in Ext~J(Tx).$  Therefore, for each $f \in Ext~ J(x),$ there exists a unique $g\in Ext~J(Tx)$ such that  $T(\mathcal{A}_f)= \mathcal{B}_g.$
		Consider the mapping $\psi:Ext~J(x)\longrightarrow Ext~J(Tx)$ defined by
		\[ \psi(f)=g,\text{	where } g\text{ is determined by}~ T(\mathcal{A}_f)= \mathcal{B}_g.\]
		Since  $|Ext~ B_{\mathbb{X}^*}|$
		$=|Ext~ B_{\mathbb{Y}^*}|,$ it follows from \ref{distinct} that $\psi $ is injective. Next, we show that $\psi$ is surjective. Let $\xi\in Ext~J(Tx).$ Then there exists $\zeta\in Ext~ B_{\mathbb{X}^*}$ such that $T(\mathcal{A}_{\zeta})= \mathcal{B}_{\xi}.$ Clearly, $Tx\in \overline{ \mathcal{B}}_{\xi}.$ Let a sequence $\{x_n\}\subset \mathcal{A}_{\zeta}$ be such that $Tx_n\longrightarrow Tx.$ Since $\mathbb{X}$ is finite-dimensional and $T$ is bijective, $T^{-1}$ exists and is continuous. Hence $T^{-1}(Tx_n)\longrightarrow  T^{-1}(Tx)\implies x_n \longrightarrow  x$ and so $x\in \mathcal{A}_{\zeta}.$ From Proposition \ref{propo}, it follows that $\zeta\in Ext~J(x).$ Thus, there exists $\zeta\in Ext~J(x)$ such that $\psi(\zeta)=\xi.$ Hence $\psi$ is surjective and consequently, $\psi$ is bijective.  Therefore, $| Ext~J(x)|=|Ext~J(T x)|.$
		
		(iii)   Let $T(\neq 0) \in \mathbb{L}(\mathbb{X},\mathbb{Y})$ preserve $\epsilon$-orthogonality at each $x \in \mathbb{X}.$ From Lemma \ref{bijective}, it follows that $T$ is bijective. Let  $ \mathcal{A}_{\eta} \in\tau(\mathcal{A}_f).$ Then from (i), it follows that there exists a unique $\zeta\in Ext~ B_{\mathbb{Y}^*}$ such that $T(\mathcal{A}_{\eta})= \mathcal{B}_{\zeta}.$ We claim that $\mathcal{B}_{\zeta}\in\tau(\mathcal{B}_g).$   Let $ x(\neq 0)\in \overline{ \mathcal{A}}_{f}\cap \overline{ \mathcal{A}}_{\eta}$ and this implies that $Tx\in T(\overline{ \mathcal{A}}_{f})\cap T(\overline{ \mathcal{A}}_{\eta}).$ Thus, $Tx(\neq 0)\in \overline{ \mathcal{B}}_{g}\cap \overline {\mathcal{B}}_{\zeta}$ and so  $\mathcal{B}_{\zeta}\in \tau(\mathcal{B}_g).$ Thus our claim is established. Consider $\phi:\tau(\mathcal{A}_f)\longrightarrow \tau(\mathcal{B}_g)$ defined by
		\[ \phi(\mathcal{A}_\eta)=\mathcal{B}_\zeta.\]
		From the above discussion, it follows that  $\phi$ is well-defined.\\
		Since $T$ is bijective, it follows that $\phi$ is injective. Now, we show that $\phi$ is surjective. Let  $\mathcal{B}_{g_1}\in \tau(\mathcal{B}_g).$ Then there exists   $f_1(\neq f)\in Ext~ B_{\mathbb{X}^*}$ such that $T(\mathcal{A}_{f_1})= \mathcal{B}_{g_1}.$
		Let $y(\neq0)\in\overline {\mathcal{B}}_{g}\cap \overline {\mathcal{B}}_{g_1}.$ Since $T$ is bijective, there exists $x\in \mathbb{X}$ such that $Tx=y.$ Let  $\{u_n\}\subset \mathcal{A}_{f}$ and $\{v_n\}\subset \mathcal{A}_{f_1}$ be such that $Tu_n\longrightarrow Tx$ and $Tv_n\longrightarrow Tx.$ As $\mathbb{X}$ is finite-dimensional and $T$ is bijective, $T^{-1}$ exists and continuous and therefore, $T^{-1}(Tu_n)\longrightarrow T^{-1}(Tx)\implies u_n \longrightarrow x.$ Thus, $x\in \overline{ \mathcal{A}}_{f}.$ Again,  $T^{-1}(Tv_n)\longrightarrow T^{-1}(Tx)\implies v_n \longrightarrow x$ and so $x\in \overline{ \mathcal{A}}_{f_1}.$ Thus, $x\in \overline A_{f}\cap \overline{ \mathcal{A}}_{f_1}$ and so
		$\mathcal{A}_{f_1}\in\tau(\mathcal{A}_f).$  Therefore, $\phi$ is surjective and consequently  $\phi$ is bijective. This completes the proof.
	\end{proof}
	
	We note that for polyhedral Banach spaces $\mathbb{X}$ and $\mathbb{Y} $ with $|Ext~ B_{\mathbb{X}^*}|>|Ext~ B_{\mathbb{Y}^*}|,$  there may exist an operator  $T\in\mathbb{L}(\mathbb{X}, \mathbb{Y})$ that preserves $\epsilon$-orthogonality for some $0<\epsilon<\epsilon_{\mathbb{Y}}$ at each $x\in \mathbb{X}.$ The following example illustrates the scenario. 
	\begin{example}\label{exnotequal}
		Let  $\mathbb{X}$ be the two-dimensional polyhedral Banach space whose unit sphere is the regular octagon  with vertices $\{u_i:1\leq i\leq8\},$ where
		$$u_i=\Big(\cos(i-1)\frac{\pi}{4},\sin(i-1)\frac{\pi}{4}\Big).$$
		and let $\mathbb{Y}=\ell_{\infty}^2.$
		Let $T\in\mathbb{L}(\mathbb{X}, \mathbb{Y})$ is given by $T(x,y)=(x-y, x+y).$ The situation is illustrated in the following figure.
		\[
		\begin{tikzpicture}[scale=2]
			
			\draw[black,thick](1,0)--({1/sqrt(2)},{1/sqrt(2)})--(0,1)--({-1/sqrt(2)},{1/sqrt(2)})--(-1,0)--({-1/sqrt(2)},{-1/sqrt(2)})--(0,-1)--({1/sqrt(2)},{-1/sqrt(2)})--(1,0);
			\draw[black,thick](5.5,.75)--(4,.75)--(4,-.75)--(5.5,-.75)--(5.5,.75);
			\node [black,scale=.9] at (1.1,.1) {$u_1$};
			\node [black,rotate=30,scale=.9] at (({1/sqrt(2)+.1},{1/sqrt(2)+.05}) {$u_2$};
			\node [black,scale=.8] at (6,.75) {$(1,1)=Tu_1$};	
			\node [black,scale=.9] at (.1,1.08) {$u_3$};	
			\node [black,scale=.8] at (3.45,.75) {$Tu_3=(-1,1)$};
			\draw[black,thick,->](1.6,0)--(3.4,0);
			\node [black,scale=.8] at (2.5,.1) {$T(x,y)=(x-y, x+y)$};
			\draw[<->, black,thick ](-1.35,0)--(1.35,0);
			\draw[<->, black,thick ](3.7,0)--(5.8,0);
			\draw[<->, black,thick ](0,-1.4)--(0,1.4);	
			\draw[<->, black,thick ](4.75,-1.3)--(4.75,1.3);	
			\node [black] at (1.4, -.15) {$X$};	
			\node [black] at (5.9, -.1) {$X$};
			\node [black] at (-.1, 1.4) {$Y$};	
			\node [black] at (4.73, 1.4) {$Y$};
		\end{tikzpicture}
		\]
		
		Let $\sqrt{2}-1<\epsilon<1.$ We show that $T$ preserves $\epsilon$-orthogonality at each point of $\mathbb{X}.$  Since Birkhoff-James orthogonality is homogeneous, it is sufficient to show that $T$ preserves $\epsilon$-orthogonality at each point of $S_{\mathbb{X}}.$ \\
		First we show that $T$  preserves $\epsilon$-orthogonality at each point of the facet $co(\{u_1,u_2\}).$ Let $u\in co(\{u_1,u_2\})$ and so $u=(1-\lambda)u_1+\lambda u_2$ for some $\lambda\in [0,1].$\\
		\textbf{Case 1:} Let $\lambda=0.$ Then $u=u_1$ and  $u^{\perp_B}=\big\{rw:r\in\mathbb{R} \text{ and } w=(1-t)\frac{u_2+u_3}{2} +t \frac{u_3+u_4}{2}, t\in [0,1]\big\}.$ Let $t\in [0,1]$ be arbitrary and let $w_1= (1-t)\frac{u_2+u_3}{2} +t \frac{u_3+u_4}{2}.$ Then $Tw_1=\big(-\frac{1}{2}-\frac{t}{\sqrt 2}, \frac{1}{2}+\frac{1-t}{\sqrt 2}\big).$ It is easy to observe that $Tu\perp_B Tw_1$ and so $ Tu\perp_B T(rw_1)$ for all $r\in \mathbb {R}.$  Since $t$ is taken arbitrarily, we can say that $T$ preserves Birkhoff-James orthogonality at  $u$ and so $T$ preserves $\epsilon$-orthogonality at $u.$ \\
		\textbf{Case 2:} Let $\lambda=1.$ Then $u=u_2$ and  $u^{\perp_B}=\big\{rw:r\in\mathbb{R} \text{ and } w=(1-t)\frac{u_3+u_4}{2} +t \frac{u_4+u_5}{2}, t\in [0,1]\big\}.$ Let $t\in [0,1]$ be arbitrary and let $w_1=(1-t)\frac{u_3+u_4}{2} +t \frac{u_4+u_5}{2}.$ Then $Tw_1=\big(-\frac{1}{2}-\frac{1}{\sqrt 2}, \frac{1}{2}-t\big).$ Let $f(\alpha, \beta)=\beta.$ Then $f\in J(Tu).$ Now, $|f(Tw_1)|=|\frac{1}{2}-t|\leq \frac{1}{2}\leq \epsilon \max\big\{|\frac12+\frac1{\sqrt{2}}|,|\frac{1}{2}-t|\big\}=\epsilon\|Tw_1\|.$ Clearly, $|f(T(rw_1))|\leq \epsilon\|T(rw_1)\|$ for all $r\in \mathbb {R}.$    Since $t$ is taken arbitrarily, from Theorem \ref{charac}, we can say that  $T$ preserves $\epsilon$-orthogonality at $u.$ \\
		\textbf{Case 3:} Let $0<\lambda<1.$  Then $Tu=\big(1-\lambda,1-\lambda+\sqrt 2\lambda\big).$ It is easy to see that $u^{\perp_B}\cap S_{\mathbb{X}}=\{ \frac{u_3+u_4}{2}\}.$ Let $f(\alpha, \beta)=\beta.$ Then $f\in J(Tu).$  Let $w=\frac{u_3+u_4}{2}.$ Then $Tw=\big(-\frac{1}{2}-\frac{1}{\sqrt{2}},\frac{1}{2}\big).$ Now, $|f(Tw)|=\frac{1}{2}\leq  \epsilon \max\big\{|\frac{1}{2}+\frac1{\sqrt{2}}|,\frac{1}{2}\big\}=\epsilon\|Tw\|.$ So $T$ preserves $\epsilon$-orthogonality at $u.$\\
		Thus, $T$ preserves $\epsilon$-orthogonality at each point of $co(\{u_1,u_2\}).$ By similar arguments, we can say that for each $i=2,3,\dots,8,$ $T$ preserves  $\epsilon$-orthogonality at each point of the facet $co(\{u_i,u_{i+1}\}) (u_9=u_1).$  Therefore, $T$ preserves $\epsilon$-orthogonality at each point of $S_{\mathbb{X}}$ and so  $T$ preserves $\epsilon$-orthogonality at each point of $\mathbb{X}.$
	\end{example}
	\begin{remark}\label{rem facet}
		Let $\mathbb{X},\mathbb{Y}$ be $n$-dimensional polyhedral Banach spaces.  
		From Theorem \ref{facetTofacet}, Theorem \ref{distinct} and the above Example \ref{exnotequal}, we have the following observations:
		\begin{enumerate}
			\item[(i)] If $T\in\mathbb{L}(\mathbb{X},\mathbb{Y})$ preserves $\epsilon$-orthogonality for some $0<\epsilon<\epsilon_{\mathbb{Y}}$ at each point of $\mathbb{X},$ then the number of facets of $B_{\mathbb{Y}}$ can not be greater than the number of facets of $B_{\mathbb{X}}.$
			
			\item[(ii)] Let $T\in\mathbb{L}(\mathbb{X},\mathbb{Y})$ preserve $\epsilon$-orthogonality for some $0<\epsilon<\epsilon_{\mathbb{Y}}$ at each point of $\mathbb{X}.$ Then the images of two different  facets of $B_{\mathbb{X}}$ may be contained in the conical hull of a facet of $B_{\mathbb{X}}.$ 
			
			\item[(iii)] In Example \ref{exnotequal}, we see that $u_2=(\frac{1}{\sqrt{2}},\frac{1}{\sqrt{2}})$ is a $2$-smooth point of $\mathbb{X}$ but $Tu_2=(0,\frac {2} {\sqrt 2})$ is a smooth point of $\mathbb{Y}.$ So we conclude that:\\
			If $T\in\mathbb{L}(\mathbb{X},\mathbb{Y})$ preserves $\epsilon$-orthogonality for some $0<\epsilon<\epsilon_{\mathbb{Y}}$ at each point of $\mathbb{X},$ then the image of a non-smooth point of $\mathbb{X}$ may be be a smooth point of $\mathbb{Y}.$ \\
			Moreover, for some $x\in\mathbb{X},$ cardinality of $Ext ~J(T x)$ may be less than the cardinality of $Ext ~J(x).$
		\end{enumerate}
	\end{remark}
	
	In contrast to [(iii), Remark \ref{rem facet} ], when the  domain and range are the same space, we have the following corollary, which is immediate from Theorem \ref{card fac equ}.
	\begin{cor}\label{non smooth}
		Let $\mathbb{X}$ be a finite-dimensional polyhedral Banach  space. Let $0\leq \epsilon<\epsilon_{\mathbb{X}}.$  If  a non-zero operator $T \in \mathbb{L}(\mathbb{X})$ preserves $\epsilon$-orthogonality at each $x \in \mathbb{X}$, then  $T$ maps each non-smooth point of $\mathbb{X}$ to a non-smooth point of $\mathbb{X}.$
	\end{cor}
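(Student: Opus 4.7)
The plan is to recognize that this corollary is a direct specialization of Theorem \ref{cardpreserve1}(ii) to the case $\mathbb{X} = \mathbb{Y}$, and to translate the cardinality statement about extreme supporting functionals into the language of smoothness.

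First, I would observe that the hypothesis of Theorem \ref{cardpreserve1} applies trivially: since the domain and codomain coincide, $|Ext~B_{\mathbb{X}^*}| = |Ext~B_{\mathbb{Y}^*}|$ automatically holds, and the range $0 \leq \epsilon < \epsilon_{\mathbb{X}}$ matches the hypothesis $0 \leq \epsilon < \epsilon_{\mathbb{Y}}$. Hence for the given non-zero operator $T \in \mathbb{L}(\mathbb{X})$ preserving $\epsilon$-orthogonality at each point, part (ii) of Theorem \ref{cardpreserve1} gives $|Ext~J(x)| = |Ext~J(Tx)|$ for every non-zero $x \in \mathbb{X}$.

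Next, I would connect smoothness to the cardinality of $Ext~J(x)$. Since $\mathbb{X}$ is a finite-dimensional polyhedral Banach space, for any non-zero $x \in \mathbb{X}$ the set $J(x)$ is the convex hull of its extreme points, which are themselves extreme points of $B_{\mathbb{X}^*}$. Thus $J(x)$ is a singleton precisely when $|Ext~J(x)| = 1$, i.e., $x$ is a smooth point if and only if $|Ext~J(x)| = 1$, and non-smooth if and only if $|Ext~J(x)| \geq 2$.

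Putting these two observations together finishes the proof: if $x$ is a non-smooth point of $\mathbb{X}$, then $|Ext~J(Tx)| = |Ext~J(x)| \geq 2$, and since $Tx \neq 0$ by Lemma \ref{bijective}, $Tx$ is a non-smooth point of $\mathbb{X}$. There is really no obstacle here — the entire content lies in Theorem \ref{cardpreserve1}(ii), and the only step to verify is the equivalence between the singleton condition defining smoothness and the cardinality of the extreme set of supporting functionals, which is immediate from the polyhedral structure of $\mathbb{X}^*$.
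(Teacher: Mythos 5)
Your proposal is correct and follows exactly the route the paper intends: the paper states the corollary is immediate from Theorem \ref{cardpreserve1}, and your argument simply spells out the specialization to $\mathbb{X}=\mathbb{Y}$, the equivalence of smoothness with $|Ext~J(x)|=1$ via the convex-hull structure of $J(x)$, and the injectivity of $T$ from Lemma \ref{bijective} to ensure $Tx\neq 0$. Nothing is missing.
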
 
	Further, under an additional assumption on the polyhedral Banach space, we have the following corollary, the proof of which follows easily from Theorem \ref{card fac equ}.
	\begin{cor}\label{corsmoothpreserve}
		Let $\mathbb{X}$ be a finite-dimensional polyhedral Banach  space such that  for any two $x_1, x_2 \in \mathbb{X},$ $|Ext~J(x_1)|>|Ext~J(x_2)|$ whenever the order of smoothness of $x_1$ is greater than the order of smoothness of $x_2.$ Suppose  $0\leq \epsilon <\epsilon_{\mathbb{X}}.$ If  a non-zero operator $T\in \mathbb{L}(\mathbb{X})$ preserves $\epsilon$-orthogonality at each $x \in \mathbb{X}$, then for each $z\in \mathbb{X},$  the order of smoothness of $Tz$ is equal to the order of smoothness of $z.$
	\end{cor}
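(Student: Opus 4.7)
The plan is to obtain this corollary as a direct consequence of Theorem \ref{cardpreserve1}(ii), using the special structural hypothesis on $\mathbb{X}$ to convert the equality of cardinalities $|Ext\ J(z)|=|Ext\ J(Tz)|$ into equality of orders of smoothness.

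First I would observe that since $T\in\mathbb{L}(\mathbb{X})$ is a non-zero operator preserving $\epsilon$-orthogonality at each point of $\mathbb{X}$, Lemma \ref{bijective} guarantees that $T$ is bijective, and Theorem \ref{cardpreserve1} applies with $\mathbb{Y}=\mathbb{X}$ (the hypothesis $|Ext\ B_{\mathbb{X}^*}|=|Ext\ B_{\mathbb{Y}^*}|$ is automatic in this setting). In particular, part (ii) of that theorem yields
\[
|Ext\ J(z)|=|Ext\ J(Tz)| \qquad \text{for every non-zero } z\in\mathbb{X}.
\]

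Next, fix an arbitrary non-zero $z\in\mathbb{X}$ and let $k_z$ and $k_{Tz}$ denote the orders of smoothness of $z$ and $Tz$, respectively. I would argue by contradiction: suppose $k_z\neq k_{Tz}$, and without loss of generality assume $k_z>k_{Tz}$. Then the standing hypothesis on $\mathbb{X}$ (that strictly greater order of smoothness forces strictly greater cardinality of extreme supporting functionals) gives $|Ext\ J(z)|>|Ext\ J(Tz)|$, contradicting the cardinality equality just established. The symmetric case $k_z<k_{Tz}$ is ruled out in exactly the same way, so $k_z=k_{Tz}$.

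There is no real obstacle here since Theorem \ref{cardpreserve1} already carries the analytic weight; the role of the new hypothesis is purely to allow the conversion from counting extreme supporting functionals to measuring the dimension of their span. The only point requiring a moment of care is verifying that the hypothesis is bidirectional in the relevant sense, which follows immediately from its monotonicity statement applied in turn to each of $k_z>k_{Tz}$ and $k_{Tz}>k_z$.
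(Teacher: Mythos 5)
Your proposal is correct and matches the paper's intended argument: the paper gives no explicit proof, stating only that the corollary "follows easily from Theorem \ref{card fac equ}," and the route is exactly the one you take — part (ii) of that theorem gives $|Ext~J(z)|=|Ext~J(Tz)|$, and the monotonicity hypothesis, applied by contradiction in both directions, forces equality of the orders of smoothness.
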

	Next, we give an example to show that the above result does not hold locally.
	\begin{example}
		Let $\mathbb{X}=\ell_{\infty}^3$ and let $0<\epsilon<1.$ Define $T\in \mathbb{L}(\mathbb{X})$ by $T(x,y,z)=(\epsilon x,x-y,y-z)$ for $(x,y,z)\in \mathbb{X}.$ Let $u=(1,1,1).$ Then $Tu=(\epsilon,0,0).$ Clearly, $3=| Ext~J(u)|>|Ext~J(T u)|=1.$
		Now, we show that $T$ preserves $\epsilon$-orthogonality at $u.$ Note that $u^{\perp_B}=\{(a,b,c)\in\mathbb{X}:ab\leq0\text{ or }bc\leq0 \text{ or }ac\leq0\}.$ Let $v=(a,b,c)\in u^{\perp_B}.$ Then $Tv=(\epsilon a,a-b,b-c).$ Let $f\in \mathbb{X}^*$ be such that $f(x,y,z)=x$ for all $(x,y,z)\in \mathbb{X}.$ Then $f\in J(Tu).$ Next, $|f(Tv)|=|a\epsilon|\leq \epsilon \max\{|\epsilon a|,|a-b|,|b-c|\}=\epsilon \|Tv\|.$ So $T$ preserves $\epsilon$-orthogonality at $u.$
	\end{example}

	In case the cardinality of the set of all extreme supporting functionals at a point is at least $ 3 $, we have the following  observation.
	\begin{proposition}\label{2smooth}
		Let $\mathbb{X}$ be a normed linear space. If $|Ext~J(x)|\geq 3,$ then any three distinct $f_1, f_2, f_3\in Ext~J(x) $ are linearly independent.
	\end{proposition}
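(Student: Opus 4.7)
The plan is to argue by contradiction. Suppose $f_1, f_2, f_3 \in Ext~J(x)$ are three distinct linearly dependent elements. Since any two supporting functionals at $x$ lying in $S_{\mathbb{X}^{*}}$ and agreeing with $\|x\|$ on $x$ cannot be proportional unless equal (this would force the proportionality constant to be $1$), no two of the $f_i$ are scalar multiples of each other. Hence, after rearranging, I can write $f_3 = \alpha f_1 + \beta f_2$ for some scalars $\alpha, \beta$.

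The first step is to pin down $\alpha$ and $\beta$ by evaluating at $x$. Since $f_1, f_2, f_3 \in J(x)$, each $f_i$ satisfies $f_i(x) = \|x\|$, and since $x \neq 0$, plugging into $f_3 = \alpha f_1 + \beta f_2$ gives
\[
\|x\| = \alpha \|x\| + \beta \|x\|,
\]
so $\alpha + \beta = 1$. Thus $f_3$ lies on the affine line through $f_1$ and $f_2$ in $\mathbb{X}^*$.

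The second step is a case analysis on the sign of $\alpha$ (equivalently $\beta$). If $0 < \alpha < 1$, then $f_3$ is a proper convex combination of $f_1, f_2 \in J(x)$; since $J(x)$ is convex and $f_3 \in Ext~J(x)$, this forces $f_1 = f_2 = f_3$, contradicting distinctness. If $\alpha \leq 0$, rewrite the relation as
\[
f_2 = \tfrac{1}{1-\alpha} f_3 + \tfrac{-\alpha}{1-\alpha} f_1,
\]
which expresses $f_2$ as a convex combination of $f_3$ and $f_1$ with coefficients in $[0,1]$ summing to $1$; if both coefficients are strictly positive (i.e., $\alpha < 0$), extremity of $f_2$ in $J(x)$ yields $f_1 = f_2 = f_3$, while $\alpha = 0$ collapses to $f_3 = f_2$ directly. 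The case $\alpha \geq 1$ is symmetric, expressing $f_1$ as a proper convex combination of $f_2$ and $f_3$.

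In each case we reach a contradiction with the hypothesis that $f_1, f_2, f_3$ are distinct elements of $Ext~J(x)$, completing the proof. I do not anticipate any real obstacle here; the one subtlety worth flagging is the reduction at the start showing no two of the $f_i$ are already proportional, which is what allows the linear dependence to be normalized to a genuine two-term expression for $f_3$.
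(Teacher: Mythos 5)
Your proof is correct and follows essentially the same route as the paper's: evaluate the dependence relation at $x$ to force the coefficients to sum to $1$, then use extremality of whichever $f_i$ ends up as a proper convex combination of the other two. Your version is in fact slightly more complete, since you justify why the dependence can be normalized to a two-term expression and you handle the boundary cases $\alpha\in\{0,1\}$ explicitly, whereas the paper simply assumes both coefficients are nonzero from the outset.
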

	\begin{proof}
		Let $x\in\mathbb{X}$ be such that $|Ext~J(x)|\geq 3.$ Consider three distinct support functionals $f_1, f_2, f_3\in Ext~J(x).$ If possible suppose that $f_3=af_1+bf_2$ for some $a,b\in\mathbb{R}\setminus \{0\}.$ Now,
		\[f_3(x)=(af_1+bf_2)(x)\implies \|x\|=a\|x\|+b\|x\|\implies a+b=1.\]
		Since $f_3\in Ext~J(x)$, it follows that $ab<0.$ Without loss of generality, we may assume that $a>0 $ and $b<0.$ Now, $\frac{1}{a}f_3+\frac{-b}{a}f_2=f_1.$ This shows that $\frac{1}{a}>0,~\frac{-b}{a}>0$ and $\frac{1}{a}+\frac{-b}{a}=1.$ This contradicts the fact that $ f_1\in Ext~J(x).$ Therefore, $f_1,f_2,f_3$ are linearly independent.
	\end{proof}
	
	\begin{remark}
		Using  Theorem \ref{cardpreserve1} and Proposition \ref{2smooth}, we observe that for  any finite-dimensional polyhedral Banach space  $\mathbb{X}$ and $0\leq\epsilon<\epsilon_{\mathbb{X}},$ if  a non-zero operator $T\in \mathbb{L} (\mathbb{X})$ preserves $\epsilon$-orthogonality at each $x\in \mathbb{X},$ then the image of each $2$-smooth point of $\mathbb{X}$ is a $2$-smooth point of $\mathbb{X}.$ 
	\end{remark}
	
	Our next result gives a partial generalization of the Blanco-Koldobsky-Turn\v{s}ek theorem in case of  some finite-dimensional polyhedral Banach spaces under an additional assumption on the norm of the images of the extreme points of the unit ball of the domain. 
	\begin{theorem}\label{isometry}
		Let $\mathbb{X}$ be a finite-dimensional polyhedral Banach space such that for each $x\in Ext~B_{\mathbb{X}},$ $|Ext~J(x)|>|Ext~J(y)|$ for all non-zero $y\in \mathbb{X}\setminus Ext~B_{\mathbb{X}}.$ Then a norm one operator $T\in \mathbb{L}(\mathbb{X})$ is an isometry if and only if $T$ satisfies the following conditions: 
		\begin{itemize}
			\item[(i)]  $\| Tu\|=\|Tv\|$ for any $u,v \in Ext~B_\mathbb{X}.$
			\item[(ii)] $T$ preserves $\epsilon$-orthogonality for any $0\leq\epsilon<\epsilon_{\mathbb{X}}$ at each $x\in \mathbb{X}.$  
		\end{itemize} 
	\end{theorem}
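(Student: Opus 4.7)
The necessity direction is immediate: an isometry preserves the norm and exact Birkhoff--James orthogonality, hence $\epsilon$-orthogonality for every $\epsilon\in[0,1)$, so (i) and (ii) hold trivially.

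For sufficiency, the plan is to assume (i) and (ii) and show that $T$ maps the unit ball onto itself. First I would note that since $\|T\|=1$, $T$ is non-zero; combined with (ii) and Lemma \ref{bijective}, this makes $T$ injective, and hence bijective by finite-dimensionality. Next I would pin down the common value appearing in (i): because $B_\mathbb{X}$ is a polytope and $x\mapsto\|Tx\|$ is a continuous convex function, the maximum $\|T\|=1$ is attained at some extreme point, and condition (i) then forces $\|Tu\|=1$ for every $u\in Ext~B_\mathbb{X}$.

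The central step is to show that $T$ maps each extreme point of $B_\mathbb{X}$ to an extreme point. For fixed $u\in Ext~B_\mathbb{X}$, applying Theorem \ref{cardpreserve1}(ii) with $\mathbb{Y}=\mathbb{X}$ (so the hypothesis $|Ext~B_{\mathbb{X}^*}|=|Ext~B_{\mathbb{Y}^*}|$ holds trivially) and invoking condition (ii) yields $|Ext~J(Tu)|=|Ext~J(u)|$. The distinguishing assumption on $\mathbb{X}$---that every extreme point of $B_\mathbb{X}$ has strictly more extreme supporting functionals than any non-extreme direction---applied to the non-zero vector $Tu$ with $\|Tu\|=1$, together with the above equality, forces $Tu\in Ext~B_\mathbb{X}$.

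Finally, since $T$ is injective and the finite set $Ext~B_\mathbb{X}$ is mapped into itself, $T$ permutes $Ext~B_\mathbb{X}$. Linearity then gives
\[T(B_\mathbb{X})=T(co(Ext~B_\mathbb{X}))=co(T(Ext~B_\mathbb{X}))=co(Ext~B_\mathbb{X})=B_\mathbb{X},\]
and a bijective linear map whose image coincides with the unit ball is necessarily an isometry (as $T(B_\mathbb{X})=B_\mathbb{X}$ together with $T^{-1}(B_\mathbb{X})=B_\mathbb{X}$ yields $\|Tx\|=\|x\|$ for all $x$). The main obstacle is the central step above; it is precisely the distinguishing hypothesis on $\mathbb{X}$ that makes the cardinality-preservation of $Ext~J(\cdot)$ strong enough to pin down $Tu$ as an extreme point.
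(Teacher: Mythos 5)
Your proposal is correct and follows essentially the same route as the paper's proof: Lemma \ref{bijective} for bijectivity, Theorem \ref{cardpreserve1}(ii) to get $|Ext~J(Tu)|=|Ext~J(u)|$, norm-attainment at an extreme point together with condition (i) to get $\|Tu\|=1$, the distinguishing hypothesis to conclude $Tu\in Ext~B_\mathbb{X}$, and finally the permutation of the finite set $Ext~B_\mathbb{X}$ to deduce $\|T\|=\|T^{-1}\|=1$. The only (immaterial) differences are the order in which $\|Tu\|=1$ and the extreme-point image are established, and your packaging of the last step as $T(B_\mathbb{X})=B_\mathbb{X}$ rather than the paper's direct inequality $\|Tx\|\leq\|x\|=\|T^{-1}(Tx)\|\leq\|Tx\|$.
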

	\begin{proof}
		We only prove the sufficient part as the necessary part is trivial.
		Let $0\leq \epsilon< \epsilon_{\mathbb{X}}$ and let  $T\in \mathbb{L}(\mathbb{X})$
		preserve $\epsilon$-orthogonality at each $x\in\mathbb{X}.$ Then it follows from [(ii), Theorem \ref{card fac equ}]  that $|Ext~J(x)|=|Ext~J(Tx)|$ for all $x(\neq0)\in\mathbb{X}$ 
		and so from the hypothesis it follows that $T$ maps each extreme point of $B_{\mathbb{X}}$  to a scalar multiple of some extreme point of $B_{\mathbb{X}}.$ Since $\mathbb{X}$ is finite-dimensional, $T$ must attains its norm at an  extreme point of $B_\mathbb{X}$ and so  by the condition (i), $\| Tu\|=\|T\|=1$ for all extreme points $u$ of $B_\mathbb{X}.$ Therefore, $T$ maps each extreme point of $B_\mathbb{X} $ to some extreme point of $B_\mathbb{X}.$  From Lemma \ref{bijective}, it follows that $T$ is bijective and so $T^{-1}$ exists and maps each extreme point of $B_\mathbb{X} $ to some extreme point of $B_\mathbb{X}.$ Now, $T^{-1}$ also attains its norm at some extreme point of $B_\mathbb{X}$ and so $\|T^{-1}\|=1.$ 
		Hence  $\|Tx\|\leq \|x\|=\|T^{-1}(Tx)\|\leq \|Tx\|$ for all $x\in\mathbb{X}.$  This implies $\|Tx\|=\|x\|$ for all  $x\in\mathbb{X}.$ Therefore, $T$ is an isometry. 
	\end{proof}
	For a two-dimensional polyhedral Banach space whose unit sphere is a $2n$-gon, we have the following observation.
	\begin{lemma}\label{conse}
		Let $ \mathbb{X} $  be a two-dimensional polyhedral Banach space whose unit sphere is a $2n$-gon for some $n\in\mathbb{N}.$ Let $0\leq \epsilon<\epsilon_{\mathbb{X}}.$  If  a non-zero operator $T \in \mathbb{L}(\mathbb{X})$ preserves $\epsilon$-orthogonality at each $x \in \mathbb{X}$ then $T$ maps two consecutive extreme points of $B_\mathbb{X} $ to scalar multiples of two consecutive extreme points of $B_\mathbb{X}.$ 
	\end{lemma}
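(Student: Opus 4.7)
The plan is to combine the bijectivity of $T$ (Lemma \ref{bijective}) with the preservation of non-smoothness (Corollary \ref{non smooth}) and the facet-to-facet correspondence of Theorem \ref{facetTofacet} in the very constrained two-dimensional geometry. In two dimensions, the non-smooth points of $\mathbb{X}$ are precisely the non-zero scalar multiples of the vertices of the $2n$-gon, so the first observation is that for every extreme point $u$ of $B_{\mathbb{X}}$, the image $Tu$ must be a non-zero scalar multiple of some vertex of $B_{\mathbb{X}}$.

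Next, I would fix two consecutive vertices $u_1, u_2$ and let $f \in \mathrm{Ext}~B_{\mathbb{X}^*}$ be the supporting functional of the facet $F = co(\{u_1, u_2\})$. By Proposition \ref{propo}(v), the set $\mathcal{A}_f$ is exactly the positive conical hull of $\mathrm{Int}_r F$, which in two dimensions is
\[
\mathcal{A}_f = \{\, r((1-t)u_1 + tu_2) : r > 0,\ t \in (0,1)\,\}.
\]
Linearity of $T$ then gives that $T(\mathcal{A}_f)$ is the positive conical hull of the open segment joining $Tu_1$ and $Tu_2$. By Theorem \ref{facetTofacet}, there is a unique $g \in \mathrm{Ext}~B_{\mathbb{X}^*}$ with $T(\mathcal{A}_f) \subset \mathcal{B}_g$, and by Proposition \ref{propo}(v) again, $\mathcal{B}_g$ is the positive conical hull of $\mathrm{Int}_r F_g$ where $F_g$ is the facet corresponding to $g$.

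The concluding step is purely geometric. Since $Tu_1$ and $Tu_2$ are limits of points in $T(\mathcal{A}_f) \subset \mathcal{B}_g$, they both lie in $\overline{\mathcal{B}_g}$, namely the closed positive cone over $F_g$. Each of them is also a non-zero scalar multiple of a vertex; the only vertices whose scalar multiples lie in this closed two-dimensional cone are the two endpoints of $F_g$, so $Tu_1$ and $Tu_2$ are scalar multiples of endpoints of $F_g$, which are consecutive vertices of $B_{\mathbb{X}}$. Finally, because $u_1, u_2$ are consecutive (hence non-antipodal and linearly independent, given $n \geq 2$) and $T$ is injective, $Tu_1$ and $Tu_2$ are themselves linearly independent, so they cannot be scalar multiples of the \emph{same} endpoint of $F_g$; thus they are scalar multiples of the two distinct consecutive endpoints of $F_g$, as claimed.

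The main obstacle I anticipate is the small but subtle geometric point in the last paragraph: one must rule out the possibility that $Tu_1$ and $Tu_2$ are scalar multiples of a single vertex of $F_g$. Fortunately this is handled cleanly by combining injectivity of $T$ with the linear independence of consecutive vertices of a $2n$-gon ($n \geq 2$); no case analysis on individual facets is required.
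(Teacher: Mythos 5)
Your proof is correct, but it follows a genuinely different route from the paper's. The paper's argument is a proof by contradiction that never invokes Theorem \ref{facetTofacet}: it first notes (via Corollary \ref{non smooth} and Lemma \ref{bijective}) that $T$ permutes the vertex-rays, then supposes $Tu_1, Tu_2$ are positive multiples of two \emph{non-consecutive} vertices $v_i, v_j$, picks an intermediate vertex $v=\alpha(1-t)v_i+\alpha t v_j$ with $t\in(0,1)$, and observes that $T^{-1}v$ is then a strictly positive combination of $u_1$ and $u_2$, hence lies in the open cone over the facet $co(\{u_1,u_2\})$ and cannot be a scalar multiple of any vertex --- contradicting the fact that $T^{-1}$ also sends vertices to multiples of vertices. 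Your direct argument instead combines Proposition \ref{propo}(v) with the facet-to-facet containment $T(\mathcal{A}_f)\subset\mathcal{B}_g$ of Theorem \ref{facetTofacet} and continuity to place both $Tu_1$ and $Tu_2$ in the closed cone over a single facet $F_g$, then uses injectivity to separate them onto the two endpoints. What your approach buys is that it avoids the slightly delicate step in the paper's proof where one must justify that $T^{-1}$ of a vertex is again a multiple of a vertex (a finite counting argument on vertex lines that the paper leaves implicit); what the paper's approach buys is independence from Theorem \ref{facetTofacet} at this point. One small imprecision in yours: a vertex $w$ antipodal to an endpoint $e$ of $F_g$ also has a scalar multiple (namely $-w=e$) in the closed cone over $F_g$, so the claim that the \emph{only} such vertices are the endpoints is not literally true; the conclusion you draw is unaffected, since $Tu_i=cw$ with $w=-e$ is still the scalar multiple $(-c)e$ of an endpoint, but you should phrase the step as ``$Tu_i$ is a scalar multiple of an endpoint of $F_g$'' rather than restricting which vertex it is a multiple of.
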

	
	\begin{proof}
		Let $T (\neq 0)\in \mathbb{L}(\mathbb{X})$ preserve $\epsilon$-orthogonality at each $x \in \mathbb{X}.$ Therefore, it follows from Corollary \ref{non smooth} that the image of each extreme point of $B_\mathbb{X} $ is a scalar multiple of some extreme point of $B_\mathbb{X}.$ Applying Lemma \ref{bijective}, we conclude that $T$ is bijective. Let $u_1, u_2$ be two consecutive extreme points of $B_\mathbb{X}.$ If possible suppose that $Tu_1=k_1v_i$ and $Tu_2=k_2v_j,$ where $v_i,v_j$ are two non-consecutive extreme points of $B_\mathbb{X} $ and $k_1,k_2>0.$ Since $v_i, v_j$ are two non-consecutive extreme points of $B_\mathbb{X},$ there exists at least one extreme point $v$ of $B_\mathbb{X} $ such that $v=\alpha (1-t)v_i+\alpha  t v_j$ for some $\alpha>0$ and $t\in(0,1).$ Since $T$ is bijective and the image of  each extreme point of $B_\mathbb{X} $ is a scalar multiple of some extreme point of $B_\mathbb{X},$ it follows that  $T^{-1}v$ is a scalar multiple of an extreme point of $B_\mathbb{X}.$  Now, $T^{-1}v=T^{-1}\big(\alpha (1-t)v_i+\alpha  t v_j\big)=(1-t)\frac{\alpha}{k_1}u_1+t \frac{\alpha}{k_2} u_2.$ 
		This contradicts the hypothesis that $u_1,u_2$ are two consecutive extreme points of $B_\mathbb{X}.$ Hence the images of any two consecutive extreme points of $B_\mathbb{X}$ are scalar multiples of two consecutive extreme points of $B_\mathbb{X}.$
	\end{proof}
	Next, we prove that  a two-dimensional Banach space whose unit sphere is a regular polygon has the \textit{Property P}.
	\begin{theorem}\label{regular}
		Let $\mathbb{X}$ be the two-dimensional polyhedral Banach space whose unit sphere is the  regular $2n$-gon, for some $n\in\mathbb{N}\setminus\{1,2\},$ with vertices $$v_j=\left(\cos(j-1)\frac{\pi}{n},\sin(j-1)\frac{\pi}{n}\right),~ 1\leq j\leq 2n.$$ Then $\mathbb{X}$  satisfies the \textit{Property P}.
	\end{theorem}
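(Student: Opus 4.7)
My plan is to combine the $\epsilon$-orthogonality preservation hypothesis (which controls the action of $T$ on the vertices up to a dihedral symmetry of the polygon) with a Chebyshev-type three-term identity coming from the rotational symmetry of the regular $2n$-gon (which forces the resulting scaling factors to coincide).

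Let $0 \le \epsilon < \epsilon_{\mathbb{X}}$ and let $T \in \mathbb{L}(\mathbb{X})$ be a nonzero operator preserving $\epsilon$-orthogonality at every point; the converse direction of \textit{Property P} is immediate since an isometry preserves Birkhoff--James orthogonality exactly. By Lemma \ref{bijective}, $T$ is bijective. In a two-dimensional polyhedral Banach space the non-smooth points are precisely the nonzero scalar multiples of the vertices $v_1, \ldots, v_{2n}$, so Corollary \ref{non smooth} implies that for each $j$, $Tv_j$ is a nonzero scalar multiple of some vertex. Absorbing signs via $v_{k+n} = -v_k$, I write
\begin{equation*}
T v_j = k_j\, v_{\sigma(j)}, \qquad k_j > 0,
\end{equation*}
where $\sigma$ is a permutation of $\{1, \ldots, 2n\}$. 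Lemma \ref{conse} then forces $\sigma$ to preserve cyclic consecutiveness, so $\sigma$ is either a cyclic rotation $j \mapsto j+c$ or a reflection $j \mapsto c-j$ modulo $2n$.

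Since each such index symmetry of the regular $2n$-gon is realised by a linear isometry of $\mathbb{X}$ (the dihedral symmetries of the $2n$-gon act on $\mathbb{X}$ by isometries of the norm), I choose an isometry $S \in \mathbb{L}(\mathbb{X})$ with $S v_{\sigma(j)} = v_j$ for every $j$ and set $T' := S \circ T$. Then $T'$ is a linear operator with $T' v_j = k_j v_j$ for every $j$, and the task reduces to showing that all the $k_j$ coincide. The crucial ingredient is the three-term identity
\begin{equation*}
v_{j+1} + v_{j-1} = 2\cos(\pi/n)\, v_j, \qquad 1 \le j \le 2n,
\end{equation*}
which follows directly from the explicit coordinates of $v_j$ via the sum-to-product formula. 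Applying the linear map $T'$ to both sides and substituting the same identity on the right-hand side yields
\begin{equation*}
(k_{j+1} - k_j)\, v_{j+1} + (k_{j-1} - k_j)\, v_{j-1} = 0.
\end{equation*}
For $n \ge 3$ the vectors $v_{j+1}$ and $v_{j-1}$ differ in angle by $2\pi/n \in (0, \pi)$ and are therefore linearly independent, so $k_{j+1} = k_j = k_{j-1}$ for every $j$. Hence all $k_j$ equal a common positive constant $k$, so $T' = k\, I$ and $T = k\, S^{-1}$ is a scalar multiple of the isometry $S^{-1}$.

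The only subtle step is the bookkeeping in the reduction: one must ensure that after absorbing signs the permutation $\sigma$ is genuinely a dihedral symmetry (the hypothesis $n \ge 3$ is essential here, since for small $n$ indices such as $\sigma(j)+n$ could spuriously appear consecutive to $\sigma(j+1)$ and confuse the sign-tracking) and that each such symmetry lifts to an honest linear isometry of $\mathbb{X}$. Once the diagonal form $T' v_j = k_j v_j$ is reached, the Chebyshev identity together with the linearity of $T'$ completes the argument without any further appeal to $\epsilon$-orthogonality.
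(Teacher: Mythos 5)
Your proposal is correct, and its core is the same as the paper's: both arguments rest on Lemma \ref{conse} (consecutive vertices go to scalar multiples of consecutive vertices) together with the three-term recurrence $v_{j+1}+v_{j-1}=2\cos(\pi/n)\,v_j$ and the linear independence of $v_{j-1},v_{j+1}$ for $n\geq 3$ to force all the scaling factors to coincide. Where you diverge is the endgame: the paper stops at $\|Tv_i\|=\|Tv_{i+1}\|$ for all $i$ and then invokes Theorem \ref{isometry} (whose proof re-uses the $\epsilon$-orthogonality hypothesis) to conclude that $T$ is a scalar multiple of an isometry, whereas you normalize by an explicit dihedral isometry $S$ realizing the index permutation $\sigma$ and deduce $T'=kI$, hence $T=kS^{-1}$, directly. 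Your finish is slightly more self-contained and constructive --- it exhibits the isometry rather than certifying one abstractly --- at the cost of the sign- and permutation-bookkeeping you correctly flag (that $\sigma$ commutes with $j\mapsto j+n$ and has constant increment $\pm 1$, so it is a genuine dihedral symmetry lifting to a linear isometry of the polygon norm). Both routes are valid; the paper's is shorter given that Theorem \ref{isometry} is already available.
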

	\begin{proof}
		We show that  any operator $T\in \mathbb{L}(\mathbb{X})$ preserves $\epsilon$-orthogonality for  any $0< \epsilon<\epsilon_{\mathbb{X}}$ at each $x\in \mathbb{X}$ if and only if $T$ is a scalar multiple of an isometry.\\
		Sufficient part of the above statement follows trivially. For the necessary part, let $0< \epsilon< \epsilon_{\mathbb{X}}$ and let  $T\in \mathbb{L}(\mathbb{X})$
		preserve $\epsilon$-orthogonality at each $x\in \mathbb{X}.$  If $T=0,$ we have nothing to prove. Let $T\neq 0.$ From Lemma \ref{bijective}, it follows that $T$ is bijective.  Suppose $v_i,v_{i+1},v_{i+2}$ are three consecutive vertices of $S_\mathbb{X}$ for some $1\leq i\leq 2n.$ We note that $v_{2n+r}=v_r$ for any $1\leq r\leq 2n.$ Now, by a simple calculation it can be seen that $v_{i+2}=-v_i+2 \cos\Big(\frac{\pi}{n}\Big) v_{i+1}.$ Next, it follows from Lemma  \ref{conse} that  $T$ maps any two consecutive vertices of $S_\mathbb{X} $ to scalar multiples of two consecutive vertices of $S_\mathbb{X}.$ So $Tv_i=k_1v_j,Tv_{i+1}=k_2v_{j+1},Tv_{i+2}=k_3v_{j+2}$ for some $k_1,k_2,k_3\in \mathbb R\setminus\{0\} $ and $1\leq j\leq 2n .$  Then 
		\begin{eqnarray*}
			&&k_3v_{j+2}=-k_1v_j+2k_2 \cos\Big(\frac{\pi}{n}\Big) v_{j+1}\\
			&\implies&k_3\Big(-v_j+2 \cos\Big(\frac{\pi}{n}\Big) v_{j+1}\Big)=-k_1v_j+2k_2 \cos\Big(\frac{\pi}{n}\Big) v_{j+1}\\
			&\implies& \Big(1-\frac{k_1}{k_3}\Big)v_j=2\Big(1-\frac{k_2}{k_3}\Big)\cos\Big(\frac{\pi}{n}\Big) v_{j+1}.
		\end{eqnarray*}
		Since $v_j$ and $v_{j+1}$ are linearly independent and $n\neq2,$ it follows that $k_1=k_2=k_3.$ Thus, $\|Tv_i\|=\|Tv_{i+1}\|$ for any $1\leq i\leq 2n-1.$ Therefore, from Theorem \ref{isometry}, it follows that $T$ is an isometry multiplied by a constant.
	\end{proof}

	Next, we prove that $\ell_{\infty}^n~(n>2) $ has the \textit{Property P}. Moreover,  we show that  every bounded linear operator on $\ell_{\infty}^n$  preserving  $\epsilon$-orthogonality  for each $0\leq \epsilon <1$ is an isometry multiplied by a constant.
	\begin{theorem}\label{ell inf}
		Let $\mathbb{X}=\ell_{\infty}^n~(n>2)$ and let $0\leq\epsilon <1.$ Then	 $T\in\mathbb{L}(\mathbb{X})$ preserves $\epsilon$-orthogonality at each $x\in\mathbb{X}$ if and only if $T$ is a scalar multiple of an isometry.
	\end{theorem}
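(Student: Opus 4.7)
The sufficient direction is immediate since a scalar multiple of an isometry preserves Birkhoff-James orthogonality, and hence $\epsilon$-orthogonality, for every $\epsilon \in [0,1)$. For the necessary direction, assume $T \neq 0$ preserves $\epsilon$-orthogonality at every $x \in \mathbb{X}$. By the convention introduced after Theorem \ref{hyperspace}, $\epsilon_{\mathbb{X}} = 1$ for $\mathbb{X} = \ell_\infty^n$, so the results requiring $0 \leq \epsilon < \epsilon_{\mathbb{X}}$ apply throughout. By Lemma \ref{bijective}, $T$ is bijective, and Theorem \ref{cardpreserve1}(ii) gives $|Ext~J(Tx)| = |Ext~J(x)|$ for every non-zero $x$. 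In $\ell_\infty^n$ the extreme functionals are $\pm f_1, \ldots, \pm f_n$, and $|Ext~J(x)|$ equals the number of coordinates of $x$ attaining the maximum absolute value; hence $|Ext~J(x)| = n$ iff $x$ is a scalar multiple of an extreme point of $B_{\mathbb{X}}$. Applied to $u \in Ext~B_{\mathbb{X}}$, this yields $Tu = \lambda_u \tilde u$ with $\tilde u \in Ext~B_{\mathbb{X}}$ and $\lambda_u > 0$ (absorbing signs into $\tilde u$).

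Next, I would show that $\Phi : u \mapsto \tilde u$ is an automorphism of the $n$-cube graph. For neighbours $u, v$ differing in one coordinate, the midpoint $m = (u+v)/2$ has $|Ext~J(m)| = n-1$, so $|Ext~J(Tm)| = n-1$. Writing $Tm = \tfrac{1}{2}(\lambda_u \tilde u + \lambda_v \tilde v)$ and counting how many coordinates attain the maximum forces $\tilde u$ and $\tilde v$ to differ in exactly one coordinate. A standard parity-plus-injectivity argument then shows that $\Phi$ extends to a signed permutation $\Phi(u)_j = \sigma_j u_{\tau^{-1}(j)}$, $\tau \in S_n$, $\sigma_j \in \{\pm 1\}$. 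Let $U$ be the associated signed permutation matrix (an isometry of $\ell_\infty^n$) and set $S := U^{-1} T$. Then $S$ preserves $\epsilon$-orthogonality and satisfies $Su = \lambda_u u$ for every extreme point $u$. It now suffices to prove $\lambda_u$ is constant.

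The main step is to exploit the linearity of $S$ through several decompositions of each basis vector. Writing $Se_i = \tfrac{1}{2}(Su - Sv)$ for various neighbour pairs $(u,v)$ along coordinate $i$, equating the $i$-th coordinate across pairs yields $\lambda_u + \lambda_v = C_i$ depending only on $i$; summing each identity over the $2^{n-1}$ pairs shows $C_1 = \cdots = C_n =: C$. Since $\lambda_u + \lambda_v = C$ for every neighbour pair, the function $u \mapsto \lambda_u - C/2$ changes sign under any single-coordinate flip, forcing $\lambda_u = C/2 + \mu_0 \prod_{k} u_k$ for some constant $\mu_0$. To pin down $\mu_0$, compute $(Se_1)_j$ for $j \neq 1$ using $u = (1, \epsilon_2, \ldots, \epsilon_n)$ and $v = (-1, \epsilon_2, \ldots, \epsilon_n)$: a direct calculation gives $(Se_1)_j = \mu_0 \prod_{k \geq 2,\, k \neq j} \epsilon_k$. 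This value must be independent of the free signs $\epsilon_k$; since $n > 2$, the product is over $n-2 \geq 1$ unconstrained signs and can equal $\pm 1$, forcing $\mu_0 = 0$. Therefore $\lambda_u \equiv C/2$ and $T = (C/2)U$, a scalar multiple of the isometry $U$.

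The anticipated obstacle is this last consistency argument: all the earlier geometric/structural information (preservation of smoothness order, bijection of facets, signed-permutation structure of $\Phi$) is still compatible with a non-constant $\lambda_u$ of the special form $C/2 + \mu_0 \prod u_k$, and only the linearity of $S$ read against two \emph{different} same-vector decompositions rules that out. The hypothesis $n > 2$ is used precisely here, since for $n = 2$ the product $\prod_{k \geq 2,\, k \neq j} \epsilon_k$ is empty and imposes no constraint — consistent with the fact that $\ell_\infty^2$ (a square) is excluded from the analogous rigidity result Theorem \ref{regular}.
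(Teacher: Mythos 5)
Your proof is correct, and it takes a genuinely different route from the paper's. The paper fixes an explicit basis $\{u_1,\dots,u_n\}$ of extreme points lying in a single facet of $B_{\mathbb{X}}$, writes the auxiliary extreme points $w_i=u_1-u_{n-i}+u_n$, and runs a four-case analysis on the sign patterns of the coordinates of the (linearly independent) images $v_1,v_{n-i},v_n$ to force $|k_1|=\cdots=|k_n|$; it then uses Theorem \ref{facetTofacet} to propagate the common norm $|k|$ to every extreme point of that facet and concludes by feeding condition (i) into Theorem \ref{isometry}. You instead observe that the induced map $\Phi$ on $Ext~B_{\mathbb{X}}=\{\pm1\}^n$ sends edges of the cube graph to edges (via the count $|Ext~J(m)|=n-1$ at edge midpoints, correctly matched with the number of max-attaining coordinates of $\tfrac12(\lambda_u\tilde u+\lambda_v\tilde v)$), hence is a hypercube graph automorphism and therefore a signed permutation; factoring out the associated isometry $U$ reduces to an operator that is diagonal on extreme points, and the two-stage consistency argument on $Se_i$ (first $\lambda_u+\lambda_v=C$ on every edge, then the $j$-th coordinate computation killing the parity term $\mu_0\prod_k u_k$) forces the eigenvalues to be constant. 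Your approach buys a self-contained conclusion ($T$ is explicitly a scalar times a signed permutation matrix) without invoking Theorem \ref{isometry} or the paper's sign-pattern case analysis, and it isolates precisely where $n>2$ enters, consistent with the failure of \textit{Property P} for $\ell_{\infty}^2$; the price is the appeal to the (standard, but here only asserted) classification of automorphisms of the cube graph $Q_n$ as signed coordinate permutations, which you should either prove or cite if this were to be written out in full.
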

	\begin{proof}
		Sufficient part of this theorem follows easily.\\
		For necessary part, let $0\leq\epsilon <1.$ If $T=0$ then we have nothing to show. Let $T\neq0.$\\
		Now, let us consider 
		$$A=\left\{u_i=\big(u_i(1), u_i(2),\ldots ,u_i(n)\big): 1\leq i\leq n\right\},$$
		where  for each $1\leq i,j\leq n,$ 
		\[ u_i(j) =  \begin{cases}
			1,& \text{ if } j=1\\
			-1,&\text{ if }1<j\leq i\\
			1,& \text{ if }j>i.
		\end{cases} \]
		
		It is easy to see that $A\subset Ext ~B_{\mathbb{X}}$ and $A$ forms a basis for $\mathbb{X}.$ Let $T\in\mathbb{L}(\mathbb{X})$ be such that  $T$ preserves $\epsilon$-orthogonality at each $x\in\mathbb{X}.$ If $T=0,$ then the theorem is trivially true. Suppose that $T\neq 0.$  From Lemma \ref{bijective}, It follows that $T $ is bijective. Next, it follows from Corollary \ref{corsmoothpreserve} that for each $1\leq i\leq n$,
		\[T(u_i)= k_iv_i,~ k_i \neq 0,~ v_i \in Ext ~B_{\mathbb{X}} .\]
		
		Now, consider the set 
		$$B=\left\{w_i=\big(w_i(1), w_i(2),\ldots ,w_i(n)\big): 1\leq i\leq n-1\right\},$$
		where  for each $ 1\leq i\leq n-1,~ 1\leq j\leq n.$
		\[ w_i(j) =  \begin{cases}
			1,& \text{ if }j\leq n-i\\
			-1,& \text{ if }j>n-i.
		\end{cases} \]
		Then by a simple calculation, it can be seen that for each $1\leq i <n-1, ~~w_i= u_1-u_{n-i}+u_n$ and so $T(w_i)= k_1v_1-k_{n-i}v_{n-i}+k_nv_n.$\\
		Since for each $1\leq i<n-1, ~ v_1 , v_{n-i},v_n$ are linearly independent, we have the following four cases:\\
		\textbf{Case 1:} There exist $1\leq j_1, j_2, j_3 \leq n$ such that
		\[	-sgn(v_1(j_1))= sgn (v_{n-i}(j_1)) = sgn (v_n(j_1)),\]
		\[ sgn (v_1(j_2))= -sgn (v_{n-i}(j_2)) = sgn (v_n(j_2)),\]
		\[	sgn (v_1(j_3))= sgn (v_{n-i}(j_3)) = -sgn (v_n(j_3)).\]
		\textbf{Case 2:} There exist $1\leq j_1, j_2, j_3 \leq n$ such that
		\[	sgn (v_1(j_1))= sgn (v_{n-i}(j_1)) = sgn (v_n(j_1)),\]
		\[ sgn (v_1(j_2))= -sgn (v_{n-i}(j_2)) = sgn (v_n(j_2)),\]
		\[	sgn (v_1(j_3))= sgn (v_{n-i}(j_3)) = -sgn (v_n(j_3)).\] 
		\textbf{Case 3:} There exist $1\leq j_1, j_2, j_3 \leq n$ such that
		\[	sgn (v_1(j_1))= sgn (v_{n-i}(j_1)) = sgn (v_n(j_1)),\]
		\[ -sgn (v_1(j_2))= sgn (v_{n-i}(j_2)) = sgn (v_n(j_2)),\]
		\[	sgn(v_1(j_3))= sgn( v_{n-i}(j_3)) = -sgn (v_n(j_3)).\]
		\textbf{Case 4:} There exist $1\leq j_1, j_2, j_3 \leq n$ such that
		\[	sgn (v_1(j_1))= sgn (v_{n-i}(j_1)) = sgn (v_n(j_1)),\]
		\[ -sgn (v_1(j_2))= sgn (v_{n-i}(j_2)) = sgn (v_n(j_2)),\]
		\[	sgn (v_1(j_3))= -sgn(v_{n-i}(j_3)) = sgn (v_n(j_3)).\]
		Now, if \textbf{Case 1} holds true, then 
		\[|-k_1-k_{n-i}+k_n|=|k_1+k_{n-i}+k_n|= |k_1-k_{n-i}-k_n|.\]
		This shows that \[|k_1|=|k_{n-i}|=|k_n|.\]
		Similarly, if \textbf{Case 2, 3, 4} hold true, then 
		\[|k_1|=|k_{n-i}|=|k_n|.\]
		Since this is true for all $1\leq i<n-1,$ we conclude that  
		\[|k_1|=|k_{2}|=\ldots=|k_n|.\]
		Therefore, for all $1\leq i<n,$ $T(u_i)=\pm k v_i,\text{ for some } k\neq 0.$ 
		Now, let $T(u_i)=\pm k v_i= z_i$$=\big(z_i(1), z_i(2),\ldots ,z_i(n)\big).$ Since $ u_i$'s are in same facet of $B_{\mathbb{X}}$ then by Theorem \ref{facetTofacet}, $\frac{Tu_i}{\|Tu_i\|}$'s are in same facet of $B_{\mathbb{X}}.$ Therefore, there exists  $1\leq j\leq n$ such that  
		\[z_i(j)= z_i(j)=\ldots =z_i(j)=\pm k.\]
		Let $y\in Ext ~ B_{\mathbb{X}}$ be such that $y$ is in some facet of $B_{\mathbb{X}}$ containing $u_1, u_2,\ldots, u_n.$ Then \[y=\sum_{i=1}^n \alpha_iu_i, ~~\sum_{i=1}^n \alpha_i=1.\]
		Then \[Ty=\sum_{i=1}^n \alpha_iTu_i, ~~\sum_{i=1}^n \alpha_i=1.\]
		Therefore, the absolute value of $j$-th coordinate of $Ty$ is
		\[\Big|\sum_{i=1}^n \alpha_iz_i(j)\Big|=\Big|k\sum_{i=1}^n \alpha_i\Big|=|k|.\]
		Thus, $\|Ty\|=|k|.$ This implies that for all $z\in Ext ~ B_{\mathbb{X}},$ $\|Tz\|=|k|.$ Therefore, from Theorem \ref{isometry}, it follows that $T$ is an isometry multiplied by a constant. This completes the proof of the theorem.
	\end{proof}
	
		Finally, we turn our attention to the spaces where the \textit{Property P}  is not satisfied. First we show that the \textit{Property P} fails to hold in the infinite-dimensional sequence spaces $\ell_p$, where $1 < p < \infty$.
	
	\begin{theorem}\label{ell p}
		Let $\mathbb{X}=\ell_p, 1 < p < \infty $. Then $\mathbb{X}$ does not have the \textit{Property P}.
	\end{theorem}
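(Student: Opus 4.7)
My plan is to refute Property P for $\ell_p$ by constructing, for each $\epsilon \in (0,1)$, a concrete operator $T_\epsilon \in \mathbb{L}(\ell_p)$ that preserves $\epsilon$-orthogonality at every point but is not a scalar multiple of an isometry. The natural candidate is a rank-one diagonal perturbation of the identity: set $\delta_\epsilon := (1+\epsilon)^{1/p} - 1 > 0$ and define $T_\epsilon(x_1, x_2, x_3, \ldots) := ((1+\delta_\epsilon)x_1,\, x_2,\, x_3, \ldots)$. Because $T_\epsilon$ stretches $e_1$ by $1+\delta_\epsilon > 1$ while fixing $e_k$ for $k \geq 2$, the norms $\|T_\epsilon e_1\|$ and $\|T_\epsilon e_2\|$ differ while $\|e_1\| = \|e_2\|$, which immediately rules out $T_\epsilon$ being any scalar multiple of an isometry.

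Verifying preservation of $\epsilon$-orthogonality reduces, via Theorem \ref{charac} and the fact that $\ell_p$ is smooth for $1 < p < \infty$, to controlling a single scalar. The unique supporting functional at a non-zero $x \in \ell_p$ is
$$f_x(z) = \|x\|^{1-p}\sum_{i \geq 1} |x_i|^{p-1}\mathrm{sgn}(x_i)\,z_i,$$
so $x \perp_B y$ is the condition $f_x(y) = 0$, and $T_\epsilon x \perp_B^{\epsilon} T_\epsilon y$ amounts to $|f_{T_\epsilon x}(T_\epsilon y)| \leq \epsilon\,\|T_\epsilon y\|$. The key step will be to expand $f_{T_\epsilon x}(T_\epsilon y)$, isolate the effect of the first coordinate, and recognize the remainder as a multiple of $f_x(y)$. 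The core identity I will derive is
$$\|T_\epsilon x\|^{p-1}f_{T_\epsilon x}(T_\epsilon y) \;=\; ((1+\delta_\epsilon)^p - 1)\,|x_1|^{p-1}\mathrm{sgn}(x_1)\,y_1 \;+\; \|x\|^{p-1} f_x(y).$$
The second term drops out by the Birkhoff-James hypothesis, while the calibration $(1+\delta_\epsilon)^p = 1 + \epsilon$ collapses the leading constant to exactly $\epsilon$.

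The final bounding step is routine. The elementary pointwise inequalities $|x_1|^{p-1} \leq \|x\|^{p-1}$ and $|y_1| \leq \|y\|$, together with the monotonicity estimates $\|T_\epsilon x\| \geq \|x\|$ and $\|T_\epsilon y\| \geq \|y\|$ (both valid because $\delta_\epsilon > 0$), combine to give $|f_{T_\epsilon x}(T_\epsilon y)| \leq \epsilon\, \|y\| \leq \epsilon\, \|T_\epsilon y\|$, as required. Hence $T_\epsilon x \perp_B^{\epsilon} T_\epsilon y$ whenever $x \perp_B y$, and since $\epsilon \in (0,1)$ was arbitrary, no single $\epsilon$ witnesses Property P for $\ell_p$.

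There is no substantive obstacle in this argument; the only creative ingredient is the precise calibration of $\delta_\epsilon$ as $(1+\epsilon)^{1/p}-1$, chosen so that the residual error term produced by the perturbation matches the orthogonality tolerance exactly. Everything else follows from the explicit Gateaux derivative of the $\ell_p$ norm and the smoothness of $\ell_p$ for $1 < p < \infty$.
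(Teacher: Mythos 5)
Your proof is correct and follows essentially the same route as the paper: both construct, for each $\epsilon\in(0,1)$, a diagonal perturbation of the identity in the first coordinate and verify $\epsilon$-orthogonality preservation via the unique supporting functional of $\ell_p$ (equivalently, the semi-inner product $[\cdot,\cdot]_p$). The only differences are cosmetic --- you stretch the first coordinate with the exactly calibrated factor $(1+\epsilon)^{1/p}$ and bound the residual term $|x_1|^{p-1}|y_1|\le\|x\|^{p-1}\|y\|$ directly, whereas the paper shrinks by $1-\frac{\epsilon}{p}$ and estimates the equivalent tail sum $\sum_{i\ge 2}y_ix_i|x_i|^{p-2}$ via H\"{o}lder's inequality.
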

	\begin{proof}
		We show that for each $0<\epsilon<1,$ there exists an operator $T\in \mathbb{L}(\mathbb{X}),$ which is not a scalar multiple of an isometry but preserves $\epsilon$-orthogonality at each $x\in\mathbb{X}.$ Let $0< \epsilon <1.$ Consider the operator $T\in\mathbb{L}(\mathbb{X})$ given by  \[Tx=\Big(\Big(1-\frac{\epsilon}{p}\Big)x_1,x_2,\dots\Big),\text{ for all } x=(x_1,x_2,\dots)\in\mathbb{X}. \]
		Clearly, $T$ is not a scalar multiple of an isometry.
		Let $x=(x_1,x_2,\dots),y=(y_1,y_2,\dots)\in \mathbb{X}$ be such that $x\perp_By.$  Note that for $u=(u_1,u_2,\dots),v=(v_1,v_2,\dots)\in \mathbb{X},$ the unique semi inner product of $ u $ and $ v $ is given by:
		\[[u,v]_p =  \begin{cases}
			\frac{\sum\limits_{i=1}^{\infty} u_iv_i|v_i|^{p-2}}{\|v\|^{p-2}},&\text{ if }v\neq0\\
			0,&\text{ if }v=0.
		\end{cases}\]
		From \cite[Th. 2]{G67}, it follows that for $u,v\in \mathbb{X},$ $u\perp_B v$ if and only if $[v,u]_p=0.$ Thus,\[x\perp_By\implies[y,x]_p=0\implies \sum\limits_{i=1}^{\infty} y_ix_i|x_i|^{p-2} =0.\] 
		From \cite[pp 308]{CSW17}, it follows that for $u,v\in \mathbb{X},$ $u\perp_B^{\epsilon} v$ if and only if $|[v,u]_p|\leq\epsilon\|u\|\|v\|.$
		Now,
		\begin{eqnarray*}
			|[Ty,Tx]_p|&=&\frac{\Big|\Big(1-\frac{\epsilon}{p}\Big)^py_1x_1|x_1|^{p-2}+\sum\limits_{i=2}^{\infty} y_ix_i|x_i|^{p-2}\Big|}{\|Tx\|^{p-2}}\\ 
			&=&\frac{\Big|-\Big(1-\frac{\epsilon}{p}\Big)^p\sum\limits_{i=2}^{\infty} y_ix_i|x_i|^{p-2}+\sum\limits_{i=2}^{\infty} y_ix_i|x_i|^{p-2}\Big|}{\|Tx\|^{p-2}}\\ 
			&=&\frac{\Big|\Big(1-\big(1-\frac{\epsilon}{p}\big)^p\Big) \sum\limits_{i=2}^{\infty} y_ix_i|x_i|^{p-2}\Big|}{\|Ty\|^{p-2}}\\
			&\leq&\frac{\epsilon \sum\limits_{i=2}^{\infty} |y_i||x_i|^{p-1}}{\|Ty\|^{p-2}}\\
			&\leq& \frac{\epsilon\Big(\sum\limits_{i=2}^{\infty} |y_i|^p\Big)^{\frac{1}{p}}\Big(\sum\limits_{i=2}^{\infty} |x_i|^{p}\Big)^{1-\frac{1}{p}}}{\|Ty\|^{p-2}}\\
			&\leq&\frac{\epsilon\|Ty\| \|Tx\|^{p-1}}{\|Tx\|^{p-2}}\\
			&\leq& \epsilon\|Tx\|\|Ty\|.
		\end{eqnarray*}
		Thus, $Tx\perp_B^{\epsilon}Ty.$
		Therefore, it follows that $T$ preserves $\epsilon$-orthogonality at each $x\in\mathbb{X}.$ This complete the proof.
	\end{proof}
	
	Next, we show that the space $\ell_1$ does not satisfy the \textit{Property P}. 
	
	\begin{theorem}\label{ell 1}
		Let $\mathbb{X}=\ell_1.$ Then $\mathbb{X}$ does not have the \textit{Property P}.
	\end{theorem}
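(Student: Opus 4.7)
The plan is to show that for every $\epsilon\in(0,1)$ there exists $T\in\mathbb{L}(\ell_1)$ which preserves $\epsilon$-orthogonality at every point yet is not a scalar multiple of an isometry; this rules out any single $\epsilon$ from witnessing \textit{Property P}. Since the semi-inner product argument of Theorem \ref{ell p} is unavailable when $p=1$, I would work directly through the dual characterization of $\epsilon$-orthogonality given by Theorem \ref{charac}.

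The natural candidate is a diagonal operator with a mild asymmetry between its weights. Concretely, for the given $\epsilon$, I would set $T(x_1,x_2,x_3,\ldots) = (x_1,(1+\epsilon)x_2,(1+\epsilon)x_3,\ldots)$. That $T$ is bounded linear with $\|T\|=1+\epsilon$ is immediate, and $T$ fails to be a scalar multiple of an isometry because the isometries of $\ell_1$ are signed permutations of the standard basis, so any diagonal scalar multiple of such an isometry must have all diagonal entries equal in modulus, whereas $|\lambda_1|=1\neq 1+\epsilon=|\lambda_i|$ for $i\geq 2$.

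The heart of the argument is the verification that $T$ preserves $\epsilon$-orthogonality. Given $x\perp_B y$ in $\ell_1$, Theorem \ref{charac} produces $f\in J(x)$ with $f(y)=0$, that is, $\sum_{i\geq 2}f_i y_i=-f_1 y_1$. Because the diagonal entries of $T$ are strictly positive, $(Tx)_i=0$ precisely when $x_i=0$ and the signs of the nonzero entries coincide, so the same $f$ lies in $J(Tx)$. Substituting into $f(Ty)=f_1 y_1+(1+\epsilon)\sum_{i\geq 2}f_i y_i$ produces a single telescoping identity $f(Ty)=-\epsilon f_1 y_1$, from which $|f(Ty)|\leq\epsilon|y_1|\leq\epsilon\|Ty\|_1$ is immediate since $\|Ty\|_1\geq|y_1|$. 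A second application of Theorem \ref{charac} then gives $Tx\perp_B^\epsilon Ty$.

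The main subtlety I expect is the step $J(x)\subset J(Tx)$: one must confirm that the multiplier $(1+\epsilon)$ acts only on indices whose sign is already fixed by $J(x)$, so that an $f$ chosen from $J(x)$ still certifies $Tx$ as a norming functional. This is precisely why the diagonal weights are taken to be strictly positive; once this point is recognised, everything else reduces to the one-line algebraic identity above, and no further estimation is required.
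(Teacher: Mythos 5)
Your proof is correct and takes essentially the same approach as the paper: the paper uses the diagonal operator $Tx=((1-\epsilon)x_1,x_2,\dots)$ and runs the identical argument (a functional $f\in J(x)$ with $f(y)=0$ still norms $Tx$, and the perturbed term is controlled by $\epsilon\|Ty\|$). Indeed, up to a positive scalar factor your operator coincides with the paper's for the parameter $\epsilon/(1+\epsilon)$, and since $\epsilon$-orthogonality preservation is invariant under scaling $T$, the two constructions are interchangeable.
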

	\begin{proof}
		As before, we show that for each $0<\epsilon<1,$ there exists an operator $T\in \mathbb{L}(\mathbb{X}),$ which is not a scalar multiple of an isometry but preserves $\epsilon$-orthogonality at each $x\in\mathbb{X}.$	Let $0<\epsilon<1.$ Consider the operator $T\in\mathbb{L}(\mathbb{X})$ given by  \[Tx=((1-\epsilon)x_1,x_2,\dots),\text{ for all } x=(x_1,x_2,\dots)\in\mathbb{X}. \]
		Clearly, $T$ is not a scalar multiple of an isometry.
		
		Now we show that $T$ preserves $\epsilon$-orthogonality at each $x\in\mathbb{X}.$ At $x=0,$ $T$ trivially preserves $\epsilon$-orthogonality. Let  $x=(x_1,x_2,\dots)\in \mathbb{X}\setminus\{0\}$ be arbitrary.  Let $u=(u_1,u_2,\dots)\in x^{\perp_B}.$ Then  there exists $ f\in J(x)$ such that $f(u)=0.$ So for each $y=(y_1,y_2,\dots)\in\mathbb{X},~
		f(y)=\sum\limits_{i=1}^{\infty} a_iy_i,$ where
		\[a_i =  
		\begin{cases}
			sgn(x_i),& \text{ if }sgn(x_i)\neq 0\\
			t,\text{ for some } t\in[-1,1],&\text{ if }sgn(x_i)= 0.
		\end{cases}
		\]
		Let $Tx=((1-\epsilon)x_1,x_2,\dots)=(w_1,w_2,\dots)$ and so $sgn(x_i)=sgn(w_i)$ for all $i\in \mathbb{N}.$  This implies that  $f\in J(Tx).$ Now, $f(u)=0$ implies that $\sum\limits_{i=1}^{\infty} a_iu_i=0.$ Hence 
		\begin{eqnarray*}
			|f(Tu)|&=&\Big|(1-\epsilon)a_1u_1+\sum_{i=2}^{\infty} a_iu_i\Big|\\
			&=& |\epsilon\sum_{i=2}^{\infty} a_iu_i|\\
			&\leq& \epsilon\sum_{i=2}^{\infty} |u_i|, \text{ as } |a_i|\leq 1 \text{ for all } i\in \mathbb{N}\\
			&\leq& \epsilon \Big(|(1-\epsilon)u_1|+\sum_{i=2}^{\infty} |u_i|\Big)\\
			&=& \epsilon \|Tu\|.
		\end{eqnarray*} 
		Since $f\in J(Tx),$ it follows from Theorem \ref{charac} that $Tu\in (Tx)^{\perp_B^{\epsilon}}.$ Therefore, $T$ preserves $\epsilon$-orthogonality at $x.$
		This completes the proof of the theorem.
	\end{proof}
	
	In view of Theorem \ref{ell p} and Theorem \ref{ell 1}, it is rather easy to obtain the following corollary.
	\begin{cor}\label{cor ell 1}
		Let $\mathbb{X}$ be the finite-dimensional  Banach space $\ell_p^n,~ 1 \leq p < \infty.$ Then $\mathbb{X}$ does not satisfy the \textit{Property P}.  
	\end{cor}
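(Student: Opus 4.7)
The plan is to transport the counter-example operators constructed in Theorem \ref{ell p} and Theorem \ref{ell 1} directly into the finite-dimensional setting. Given any $0 < \epsilon < 1$, I would define $T \in \mathbb{L}(\ell_p^n)$ by modifying only the first coordinate: set
\[
T(x_1, x_2, \ldots, x_n) = \left(\left(1 - \tfrac{\epsilon}{p}\right) x_1,\, x_2,\, \ldots,\, x_n\right)
\]
for $1 < p < \infty$, and replace the factor $1 - \epsilon/p$ by $1 - \epsilon$ in the case $p = 1$. This is clearly not a scalar multiple of an isometry, since $\|Te_1\|$ is strictly less than $\|Te_2\|$ while $\|e_1\| = \|e_2\| = 1$.

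The remaining task is to check that $T$ preserves $\epsilon$-orthogonality at every $x \in \ell_p^n$, and here I would simply repeat the estimates of Theorems \ref{ell p} and \ref{ell 1} with all infinite sums replaced by the finite sum $\sum_{i=1}^{n}$. For $1 < p < \infty$, the semi inner product $[u,v]_p = \sum_{i=1}^{n} u_i v_i |v_i|^{p-2} / \|v\|^{p-2}$ satisfies Giles' characterization $u \perp_B v \Leftrightarrow [v,u]_p = 0$ in the finite-dimensional case as well, and the Hölder-type bound used at the end of the proof of Theorem \ref{ell p} transfers verbatim. For $p = 1$, the description of $J(x)$ in terms of sign vectors with free choice in $[-1,1]$ on zero coordinates is exactly the same on $\ell_1^n$ as on $\ell_1$, so the choice of $f \in J(x) \cap J(Tx)$ and the bound $|f(Tu)| \leq \epsilon \|Tu\|$ go through unchanged.

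There is no real obstacle to this argument; the authors themselves describe the corollary as immediate. The only minor sanity check is to confirm that the functional-analytic ingredients used in the two infinite-dimensional proofs (Giles' theorem on semi inner products, the characterization of $\epsilon$-orthogonality via the semi inner product, and the description of support functionals on $\ell_1$) do not implicitly require completeness or infinite dimension, which they do not. Hence the same operator serves as a witness that $\ell_p^n$ fails Property P for every $n \geq 2$ and every $1 \leq p < \infty$.
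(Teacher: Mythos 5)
Your proposal is correct and matches the paper's intent: the authors give no explicit proof, stating only that the corollary is immediate from Theorems \ref{ell p} and \ref{ell 1}, and what they mean is precisely what you do — transplant the same diagonal perturbation operators and estimates to $\ell_p^n$ with finite sums. You are also right to adapt the proofs rather than merely cite the theorem statements, since Property P is not hereditary and so the failure for $\ell_p$ does not formally transfer to its subspaces.
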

	
		We note  that the two-dimensional polyhedral Banach space whose unit sphere is the  regular $4$-gon  does not have the \textit{Property P} as it is isometrically isomorphic to $\ell_1^2.$ 
	
	\begin{remark}
		We end this article with the following observation that the \textit{Property P} is not hereditary. 
		\begin{itemize} 
			\item[(i)] Let us consider the space  $\mathbb{X}=\ell_{1}^3$  and the subspace  $\mathbb{Y}=span\{u,v\},$ where $u=(1,0,1),v=(1,1,0)\in\mathbb{X}.$ It is easy to see that the unit sphere $S_{\mathbb {Y}}$  of $\mathbb{Y}$ is a regular hexagon with vertices $\Big\{\pm\Big(\frac{1}{2},0,\frac{1}{2}\Big ), \pm\Big(\frac{1}{2},\frac{1}{2}, 0\Big ), \pm\Big(0,\frac{1}{2},-\frac{1}{2}\Big )\Big\}$  and so by Theorem \ref{regular}, it follows that $\mathbb{Y}$ has the \textit{Property P}. However,  from Corollary \ref{cor ell 1}, it follows that $\mathbb{X}$ lacks the \textit{Property P}. Thus, even if a Banach space lacks the \textit{Property P}, one of its subspaces may possess the same. 
			\item[(ii)] Consider the space $ \mathbb X =\ell_{\infty}^n~(n>2)$.  From Theorem \ref{ell inf}, it follows that $\ell_{\infty}^n~(n>2)$ has the \textit{Property P} whereas the subspace   $\ell_{\infty}^2$ lacks the \textit{Property P}.  So we conclude that if  a Banach space $\mathbb{X}$ has the \textit{Property P} then it is not necessarily true that  a subspace $\mathbb{Y}$ of $\mathbb{X}$ has the \textit{Property P}.
		\end{itemize}
	\end{remark}

\end{document}